\def\expandafter\normalsize\expandafter{%
    \normalsize%
    \setlength\abovedisplayskip{5pt}%
    \setlength\belowdisplayskip{5pt}%
    \setlength\abovedisplayshortskip{-3pt}%
    \setlength\belowdisplayshortskip{3pt}%
}
\newtheorem{theorem}{Theorem}[section] 
\newtheorem{corollary}{Corollary}[theorem] 
\newtheorem{lemma}[theorem]{Lemma} 
\newtheorem{proposition}[theorem]{Proposition}
\theoremstyle{definition}
\newtheorem{example}{Example}
\theoremstyle{remark}
\newtheorem*{remark}{Remark} 
\DeclareMathOperator{\outdeg}{outdegree}
\newcommand{\floor}[1]{\left\lfloor #1 \right\rfloor}
\title{Labeled Chip-Firing on Directed $k$-ary Trees and Where Chips Land}
\author{Ryota Inagaki \and Tanya Khovanova \and Austin Luo}
\date{}
\begin{document}

\maketitle

\begin{abstract}
    Chip-firing is a combinatorial game played on a graph, in which chips are placed and dispersed on the vertices until a stable configuration is achieved. We study a chip-firing variant on an infinite, rooted directed $k$-ary tree, where we place $k^n$ chips labeled $1,2,3,\dots, k^n$ on the root for some nonnegative integer $n$. A vertex $v$ can fire if it has at least $k$ chips; when it fires, $k$ chips are selected, and the chip with the $i$th smallest label is sent to the $i$th leftmost child of $v$. A stable configuration is reached when no vertices can fire. In this paper, we prove numerous properties of the stable configuration, such as that chips land on vertices in ranges and the lengths of those ranges. We also describe where each chip can land. This helps us describe possible stable configurations of the game.
\end{abstract}

\renewcommand{\thefootnote}{\fnsymbol{footnote}} 

\footnotetext{\emph{2020 Mathematics Subject Classification}:  05C57, 05C63, 05A05}

\footnotetext{\emph{Key words and phrases: } Labeled Chip-Firing, Directed Trees} 

\section{Introduction}
Chip-firing is a single-player game on graphs and is an important topic in algebraic, enumerative, and structural combinatorics. Chip-firing originates from problems such as the Abelian Sandpile studied by Bak et al.~\cite{PhysRevLett} and Dhar \cite{dhar1999abelian}, which states that when a stack of sand grains exceeds a certain height, the stack will disperse grains evenly to its neighbors. Eventually, the state of the sandpile may be a stable configuration, a distribution of sand in which every stack of sand grains cannot reach the threshold to disperse. The study of chip-firing as a combinatorial game on graphs began from works such as those of Spencer \cite{MR856644}, Anderson et al.~\cite{zbMATH04135751} and Bj\"orner et al.~\cite{MR1120415}. Many variants of the chip-firing game (see, for instance, \cite{MR4486679, MR3311336, MR3504984}) allow the discovery of different properties. In each variant, chips are either distinguishable (labeled) or indistinguishable (unlabeled) from each other.

\subsection{Unlabeled Chip-Firing on Directed Graphs}
\label{sec:unlabeledchipfiring}

Unlabeled chip-firing is a game where indistinguishable chips are placed on vertices in a directed graph $G = (V, E)$. If a vertex has enough chips to transfer one chip to each out-neighbor, then that vertex can fire. If there are at least $\outdeg(v)$ chips at a vertex $v$, it can fire, that is, it sends one chip to each neighbor and thus loses $\outdeg(v)$ chips. Once no vertex can fire, we reach a \textit{stable configuration} of chips.

\begin{example}
Figure~\ref{fig:exampleunlabel} shows the unlabeled chip-firing process when we start with $4$ chips at the root of an infinite binary tree.
\end{example}

\begin{figure}[H]
\centering
    \subfloat[\centering Initial configuration with $4$ chips]{{\includegraphics[width=0.3\linewidth]{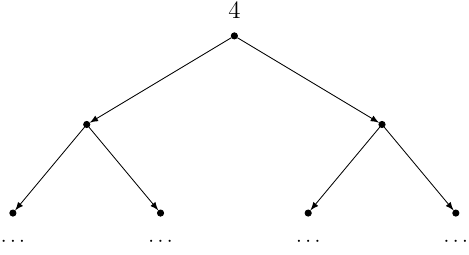} }}%
    \qquad
    \subfloat[\centering Configuration after firing once]{{\includegraphics[width=0.3\linewidth]{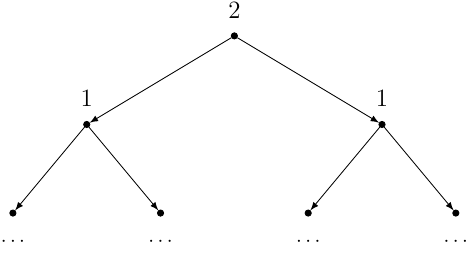} }}%
    \qquad
    \subfloat[\centering Configuration after firing twice]{{\includegraphics[width=0.3\linewidth]{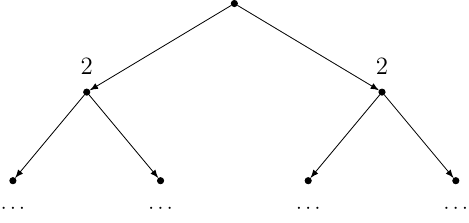} }}%
    \qquad
    \subfloat[\centering Stable configuration]{{\includegraphics[width=0.3\linewidth]{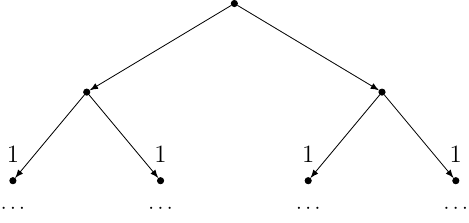} }}%
    \caption{Example of unlabeled chip-firing on an infinite directed, rooted binary tree}%
    \label{fig:exampleunlabel}
\end{figure}

We define a \textit{configuration} $\mathcal{C}$ as a distribution of chips over the vertices of a graph, which one can write as a vector $\vec{c}$ in $\mathbb{N}^{|V|}$ (the set of infinite sequences indexed by the nonnegative integers whose entries are nonnegative integers), where the $k$th entry in $\vec{c}$ is the number of chips on vertex $v_k$ of the graph. One important property of directed graph chip-firing with unlabeled chips is the following analog to confluence properties for undirected graph chip-firing (similar to Theorem 2.2.2 of \cite{klivans2018mathematics}) and stabilization of Gabrielov's Abelian Avalanche model \cite{ MR1215018, gabrielov1994asymmetric}.

\begin{theorem}[Theorem 1.1 of \cite{MR1203679} and Theorem 2.1 of \cite{MR1120415}]\label{thm:GlobalConfluence}
For a directed graph $G$ and an initial configuration $\mathcal{C}$ of chips on $G$, the unlabeled chip-firing game will either run forever or end after the same number of moves and at the same stable configuration. Also, the number of times each vertex fires is the same regardless of the sequence of fires taken in the game.
\end{theorem}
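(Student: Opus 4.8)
The plan is to treat each firing as an additive operation on the configuration vector and to prove an ``abelian'' exchange property, from which all three claims (same number of moves, same stable configuration, same per-vertex firing counts) follow simultaneously. Write the effect of firing a vertex $v$ as $\vec{c}\mapsto \vec{c}+\mathbf{f}_v$, where $\mathbf{f}_v\in\mathbb{Z}^{|V|}$ has entry $-\outdeg(v)$ at $v$ and $+1$ at each out-neighbor of $v$. Firing $v$ is \emph{legal} at $\vec{c}$ when the $v$th entry of $\vec{c}$ is at least $\outdeg(v)$. Since every finite firing sequence touches only finitely many vertices, all arguments below are finitary even though the tree is infinite.

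First I would establish monotonicity of legality: firing any vertex $u\ne v$ changes the $v$th coordinate by $0$ or $+1$, so it never decreases the number of chips at $v$. Consequently, if $v$ is legally fireable at $\vec{c}$, it remains legally fireable after firing any other vertex. This immediately yields a one-step diamond lemma: if $u$ and $v$ are both legally fireable at $\vec{c}$, then firing $u$ then $v$ and firing $v$ then $u$ are both legal and reach the identical configuration $\vec{c}+\mathbf{f}_u+\mathbf{f}_v$, since the translations commute.

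The crux is the following exchange lemma: if $v$ is legally fireable at $\vec{c}$ and $\tau=(w_1,\dots,w_\ell)$ is a terminating legal firing sequence from $\vec{c}$, then $v$ occurs in $\tau$, and $v$ may be moved to the front to produce another terminating legal sequence reaching the same final configuration. To see that $v$ must occur, suppose it does not; then by monotonicity the chip count at $v$ only grows along $\tau$, so $v$ is still fireable in the terminal configuration, contradicting stability. To move $v$ to the front, I would repeatedly apply the diamond lemma to commute $v$ past each earlier fire, none of which equals $v$; each swap preserves legality and leaves the configuration after the prefix unchanged, so the whole reordered sequence is legal and ends at the same stable vector.

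The theorem then follows by induction on the length $m$ of a terminating sequence $\sigma=(v_1,\dots,v_m)$. Given any other terminating sequence $\tau$, apply the exchange lemma with $v=v_1$ to rewrite $\tau$ as $(v_1,\tau')$, where $\tau'$ terminates from $\vec{c}+\mathbf{f}_{v_1}$; since $(v_2,\dots,v_m)$ also terminates from that configuration, the inductive hypothesis shows $\tau'$ is a permutation of $(v_2,\dots,v_m)$, whence $\tau$ is a permutation of $\sigma$. This gives equal length and equal per-vertex firing counts, and because the final configuration equals $\vec{c}+\sum_v n_v\mathbf{f}_v$ with $n_v$ the firing count of $v$, equal counts force identical stable configurations. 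The same domination settles the dichotomy: no legal sequence can be longer than a terminating one, so if some sequence terminates, none can run forever. I expect the exchange lemma --- in particular the careful bookkeeping that bubbling a fire to the front preserves legality of the entire tail --- to be the main obstacle, while the additive reformulation is precisely what makes that bookkeeping routine.
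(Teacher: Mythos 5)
Your proof is correct, but note that the paper does not prove this statement at all: it is imported verbatim, with citations, from Bj\"orner--Lov\'asz and Bj\"orner--Lov\'asz--Shor. Your argument --- monotonicity of legality, the one-step diamond lemma, the exchange lemma that bubbles a fireable vertex to the front of any terminating sequence, and the induction showing any two terminating sequences are permutations of one another --- is essentially the standard proof from those references, and the finitary remark correctly handles the fact that the underlying tree is infinite. The only point worth tightening is that your diamond lemma should explicitly require $u \neq v$ (firing the same vertex twice in a row need not be legal), which your monotonicity statement already presupposes and which holds in every application you make of it.
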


\subsection{Labeled Chip-Firing on Directed Graphs}
\label{labeledchipfirng}

Labeled chip-firing is a variant of chip-firing where the chips are distinguishable. In this paper, we assign to each chip a number from the set $\{1,2,\dots, N\}$ where there are $N$ chips in total. A vertex $v$ can fire if it has at least $\outdeg(v)$ chips. When a vertex fires, we choose any $\outdeg(v)$ of the labeled chips and disperse them, one chip for each neighbor. The chip each neighbor receives may depend on the label of the chip. The study of labeled chip-firing was initiated in the context of one-dimensional lattices by Hopkins, McConville, and Propp \cite{MR3691530}.

In this paper, we continue the study of labeled chip-firing specifically in the context of infinite directed $k$-ary trees for $k \geq 2$. Consider an infinite directed $k$-ary tree, more specifically, the set of possible chips that can land at the vertices of the tree. Since each vertex $v$ has $\outdeg(v) = k$, a vertex can fire if it has at least $k$ chips. When a vertex fires, we arbitrarily select $k$ chips and send the $i$th smallest chip to the $i$th leftmost child. Note that when we say a chip is smaller than or larger than another chip, we refer to the numerical values of the labels assigned to them. 

In labeled chip-firing, the global confluence property does not always hold. In other words, one can obtain different stable configurations depending on the sets of chips we arbitrarily select to fire. Two stable configurations would always have the same number of chips at each vertex, but the labels might differ. This prompts an exciting branch of research in chip-firing, even in the context of simple settings (e.g., \cite{MR4333108, MR4827886}).
 
\begin{example}
Consider a directed binary tree where each vertex has two children and an initial configuration consisting of the labeled chips $1,2,3,4$ at the root. Notice that since chips are only sent along directed edges, once a chip is sent to the left or right, it cannot go back. Therefore, if we fire the pair of chips $(1,2)$ first, we end up with a different stable configuration than if we fire the pair $(2,3)$ first. Figure~\ref{fig:confluencebreak} illustrates this initial firing. 
\end{example}

\begin{figure}[H]
\centering
    \subfloat[\centering Configuration after firing $(1,2)$]{{\includegraphics[width=0.35\linewidth]{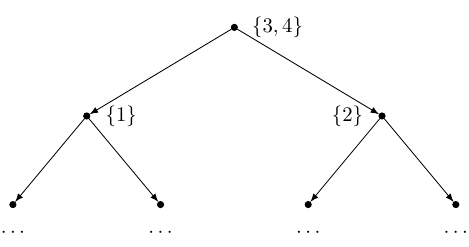} }}%
    \qquad
    \subfloat[\centering Configuration after firing $(2,3)$]{{\includegraphics[width=0.35\linewidth]{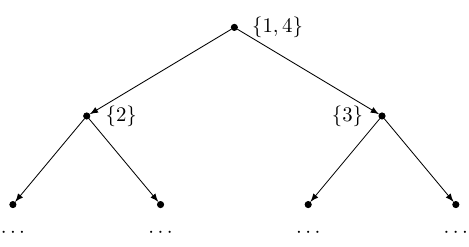} }}%
    \caption{Example of confluence breaking}%
    \label{fig:confluencebreak}
\end{figure}

Therefore, to obtain certain stable configurations, we pick certain sets of chips to fire.

 \subsection{Motivation}

Labeled chip-firing has been studied on infinite, undirected binary trees when starting with $2^{n} - 1$ chips at the root for some $n \in \mathbb{N}$ (where $0 \in \mathbb{N}$) by Musiker and Nguyen \cite{MR4827886}. At the end of their paper, Musiker and Nguyen posed the following questions:
 \begin{itemize}
     \item What do the possible stable configurations look like?
     \item How many of them are there?
 \end{itemize}
 
In \cite{inagaki2024chipfiringundirectedbinarytrees}, the authors of this paper partially answered the second question above by providing an upper bound on the number of possible stable configurations. In  \cite{MR4887467}, the authors of this paper addressed the above two questions but in the context of labeled chip-firing on directed $k$-ary trees by calculating the number of possible stable configurations of $k^n$ labeled chips on the directed $k$-ary tree and studying the stable configurations from the game as permutations of $1, 2, \dots, k^n$. In \cite{inagaki2025permutationbasedstrategieslabeledchipfiring}, the authors of this paper continued the study of stable configurations from the chip-firing game on directed $k$-ary trees by considering placements of chips resulting from firings dictated by a permutation-based strategy, i.e., a procedure in which how each vertex fires depends on a permutation of $1,2,\dots, n$ and its layer/vertical position within the tree.

In this paper, we address the first question above for the labeled chip-firing game on directed $k$-ary trees. We do this by answering the following question: given $k^n$ labeled chips starting at the root and the label $c$ of a chip, at which vertices can chip $c$ end up in during the chip-firing game? To answer this question, we start with a related question: given a vertex in the terminal configuration, which chips can land there?

\subsection{Road Map}

In Section~\ref{sec:prelim}, we introduce preliminaries and definitions of labeled chip-firing on the $k$-ary tree, such as traversal strings, which bijectively correspond to each vertex in the tree. Next, in Section~\ref{sec:landingorder}, we introduce a more precise way to track how chips move through the firing process. We do this via landing orders, which are ordered pairs of form $(t,x)$ where $t$ is a traversal string and the relative order $x$ of a chip on a vertex with traversal string $t$. In Section~\ref{sec:layer2}, we define a structure for a poset (partially ordered set) on landing orders for layer $2$ and prove formulas for the smallest and largest chips that can land in the landing order $(t,x)$ on layer $2$.

In Section~\ref{sec:smallestchip}, we use the results from Section~\ref{sec:layer2} to derive a formula for the smallest and largest chip value that can land on a vertex based on its traversal string. Furthermore, in Section~\ref{sec:ranges}, we extend the results in Section~\ref{sec:smallestchip} and prove that any chip in the range between the smallest and the largest can land on a vertex; we call such an interval a landing range. We describe several properties of landing ranges in Section~\ref{sec:propertiesofranges}, where we discuss possible lengths of landing ranges.

In Section~\ref{sec:WheredoChipsLand}, we look at where the chips can land from the point of view of a chip. We establish where chips can land and discuss the spreads of chips, the difference between the leftmost and rightmost vertices where they can land. We also show that chips can be grouped into sets of consecutive chips with the same landing pattern.

\section{Preliminaries and Definitions}
\label{sec:prelim}
We give definitions essential to our discussion of chip-firing. These are almost the same as those established in our previous paper \cite{MR4887467}.

\subsection{Definitions}
\label{sec:definitions}
In this paper, we consider infinite rooted directed $k$-ary trees as our underlying graphs. 

In a \textit{rooted tree}, we denote one distinguished vertex as the \textit{root} vertex $r$. Every vertex in the tree, excluding the root, has exactly one parent vertex. A vertex $v$ has \textit{parent} $v_p$ if there is a directed edge $v_p \to v$. If a vertex $v$ has parent $v_p$, then vertex $v$ is a \textit{child} of $v_p$. An \textit{infinite directed $k$-ary} tree is an infinite directed rooted tree where each vertex has an out-degree of $k$ and an in-degree of $1$, except the root, which has $k$ children but zero parents. The edges are directed from a parent to the children. 

We define a \textit{traversal string} $t$ to be a finite-length string of integers from $1$ to $k$ inclusive. Each traversing string $t = t_1t_2\dots t_i$ defines a directed path from the root vertex, where for each $j \in [i]$ the $(j+1)$st vertex in the path is the $t_j$th leftmost child of its parent. The vertex $v_t$ defined by the traversing string $t = t_1t_2\dots t_{i}$ is the unique vertex that is at the endpoint of this path.

\begin{example}
    Consider the directed 3-ary tree in Figure \ref{fig:3arytraversal}. The second leftmost vertex in layer 3, which is labeled $v_{12}$, has the traversal string $12$ since it is reached by taking the leftmost child of the root $v_1$ and the second leftmost child of vertex $v_1$. The vertex $v_{22}$ has traversal string $22$ since it is reached by traversing down a middle-directed edge twice.
\end{example}

\begin{figure}[H]
    \centering
    \includegraphics[width=0.8\linewidth]{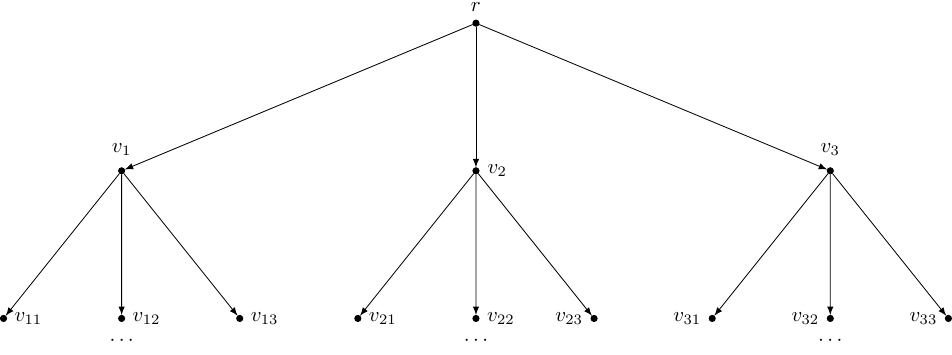}
    \caption{Traversal strings on layers of a $3$-ary tree }
    \label{fig:3arytraversal}
\end{figure}

We define the \textit{initial configuration} (or state) of chip-firing as placing $N$ chips on the root where, in the case of labeled chip-firing, they are labeled $1,2, \dots, N$. A vertex $v$ can \textit{fire} if it has at least $\outdeg(v)=k$ chips. When vertex $v$ \textit{fires}, it transfers a chip from itself to each of its $k$ neighbors. In the setting of labeled chip-firing, when a vertex fires, it chooses and fires $k$ of its chips so that among those $k$ chips, the one with the $ith$ smallest label gets sent to the $i$th leftmost child from the left.

A \textit{strategy} is an algorithm that dictates the order in which $k$-tuples of chips on a vertex are fired from each vertex.

In this paper, we assume $k \geq 2$ since if $k = 1$ and the tree has any positive number of chips, then the chip-firing process can continue indefinitely.

We define a vertex $v_j$ to be on \textit{layer} $i+1$ if the path of vertices traveled from the root to $v_j$ traverses $i$ vertices. Thus, the root $r$ is on layer $1$.

A \textit{stable configuration} is a distribution of chips over the vertices of a graph such that no vertex can fire. In this paper, we write each stable configuration as a permutation of $1, 2, \dots, k^{n}$, which is the sequence of chips in the $(n+1)$th layer of the tree in the stable configuration read from left to right. This is because, as we shall see in the next subsection, the stable configuration will have one chip at each vertex in layer $n+1$ since the chip-firing game always starts with $k^n$ chips at the root. This is our convention for the rest of this paper. For instance, the stable configuration in Figure~\ref{fig:directedex} is written as the permutation $135246789$.

Given a chip-firing game starting with $k^n$ labeled chips 1, 2, $\dots$, $k^n$ on the directed $k$-ary tree, we define a \textit{trivial chip} to be the chip labeled $1$ or $k^n$. Similarly, we define a \textit{nontrivial chip} to be any other chip. In this context, we define a \textit{trivial vertex} to be the leftmost or the rightmost vertex in the $(n+1)$st layer of the $k$-ary tree. We call the corresponding traversal string the \textit{trivial string}: these are the strings consisting of only $1$s or the string consisting only of $k$. We define a \textit{nontrivial vertex} to be any other vertex in the $n+1$st layer. We define a \textit{nontrivial traversal} string correspondingly.

\subsection{Unlabeled Chip-Firing on Directed \texorpdfstring{$k$}{k}-ary Trees}
\label{sec:unlabeled}

We first examine properties of unlabeled chip-firing on infinite directed $k$-ary trees when starting with $k^n$ chips at the root, where $n \in \mathbb{N}$. As the stable configuration and the number of firings do not depend on the order of firings, we can assume that we start from layer 1 and proceed by firing all the chips on the given layer before going to the next layer. Thus, for each $i \in \{1,2, \dots, n \}$, each vertex on layer $i$ fires $k^{n-i}$ times and sends $k^{n-i}$ chips to each of its children. In the stable configuration, each vertex on layer $n + 1$ has exactly $1$ chip, and for all $i \neq n + 1$, the vertices on layer $i$ have $0$ chips.

\subsection{Labeled Chip-Firing on Directed \texorpdfstring{$k$}{k}-ary Trees}
\label{sec:labeled}

 We now give an example of a labeled chip-firing game on the directed $k$-ary tree for $k=3$.
 
\begin{example}
Consider again a directed ternary (3-ary) tree with $9$ labeled chips at the root. Figure~\ref{fig:directedex} shows a possible sequence of firings.
\end{example}
\begin{figure}[H]
    \centering
    \subfloat[\centering Initial configuration with $9$ chips]{{\includegraphics[width=0.4\linewidth]{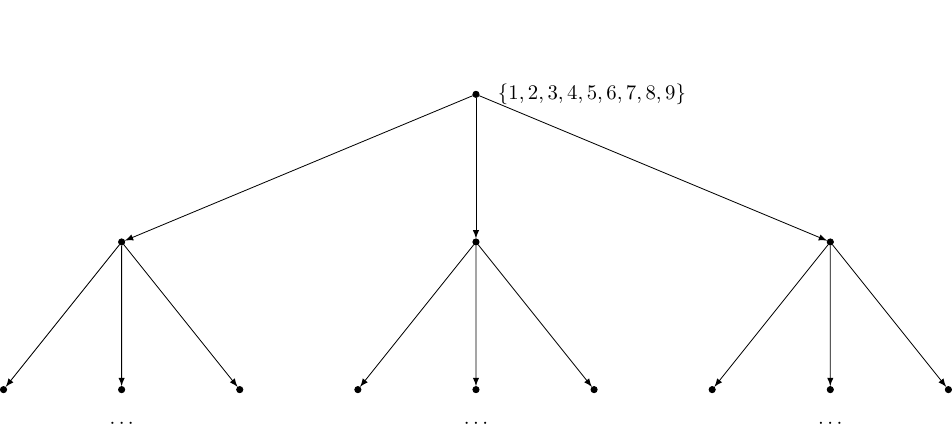} }}
    \qquad%
    \subfloat[\centering State after firing triples: $(1,2,7)$, $(3,4,8)$, and $(5,6,9)$ at the root]{{\includegraphics[width=0.4\linewidth]{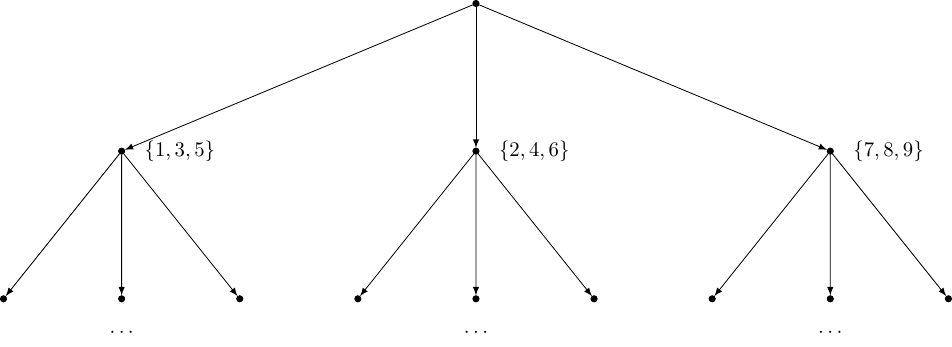} }}%
    \qquad
    \subfloat[\centering Stable configuration]
    {{\includegraphics[width=0.4\linewidth]{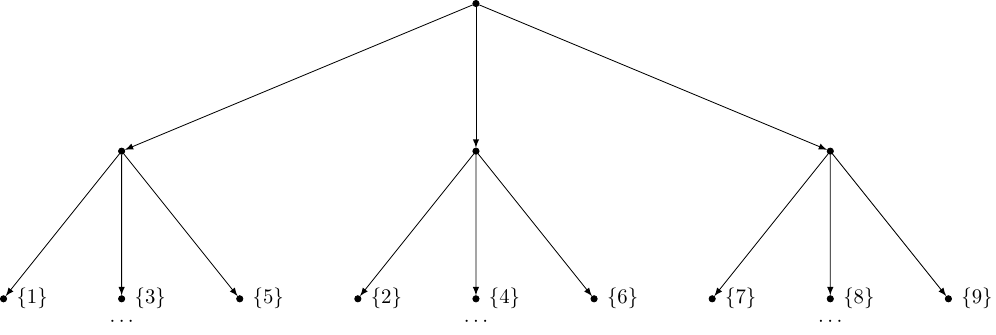} }}%
    \caption{Example of labeled chip-firing in a directed ternary tree with $9$ chips}
    \label{fig:directedex}
\end{figure}

In the previous example, observe that given that a vertex fires a set of ordered triples of labeled chips, any order in which those same triples of chips are fired yields the same distribution of chips to the children. This is a general fact, where we can replace triples with $k$-tuples.

We conclude the section with a useful lemma on the vertices where trivial chips end up: 
\begin{lemma}[Lemma 2.1 of \cite{MR4887467}]
 \label{lem:TrivialChips}
    Consider a directed $k$-ary tree with $k^n$ labeled chips starting at the root. Chip $1$ always lands on $v_{11\dots 11}$, and chip $k^n$ lands on  $v_{kk\dots kk}$. In addition, no nontrivial chip lands on trivial vertices.
\end{lemma}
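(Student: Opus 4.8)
The plan is to track the two extreme chips individually and exploit the fact that the firing rule is monotone in the labels. The starting point is the unlabeled analysis of Section~\ref{sec:unlabeled}: since the chip counts at each vertex are strategy-independent (Theorem~\ref{thm:GlobalConfluence}), in any stable configuration every vertex of layer $n+1$ holds exactly one chip and all other layers are empty. Consequently every chip, and in particular chips $1$ and $k^n$, must eventually come to rest on some layer-$(n+1)$ vertex.

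First I would handle chip $1$. The key observation is that $1$ is the global minimum label, so whenever $1$ belongs to a fired $k$-tuple it is the smallest element of that tuple and is therefore sent to the leftmost child. Hence the entire trajectory of chip $1$ is confined to the leftmost spine of the tree, i.e.\ the vertices $v_{1^m}$ for $m = 0, 1, 2, \dots$. It remains to argue that chip $1$ actually descends all the way to layer $n+1$: each spine vertex on layers $1, \dots, n$ fires $k^{n-i} \geq 1$ times and ends with zero chips (by the unlabeled count), so every chip residing there, including $1$, is eventually fired and moves one step further down the spine. The unique spine vertex on layer $n+1$ is $v_{1^n} = v_{11\dots 1}$, so chip $1$ lands there, and this conclusion holds independently of the strategy. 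A symmetric argument, replacing \emph{minimum} and \emph{leftmost} by \emph{maximum} and \emph{rightmost}, shows that chip $k^n$ is always the $k$th-smallest element of any tuple it joins, travels down the rightmost spine, and lands on $v_{k^n} = v_{kk\dots k}$.

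Finally, for the third claim I would combine the two landing results with the one-chip-per-vertex property. The trivial vertices are exactly $v_{1^n}$ and $v_{k^n}$, which we have just shown are occupied by chips $1$ and $k^n$ respectively; since each layer-$(n+1)$ vertex carries exactly one chip in the stable configuration, no other chip—hence no nontrivial chip—can land on either trivial vertex.

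The main obstacle is making the word \emph{always} rigorous: one must verify that the confinement-to-a-spine argument holds under every strategy, not merely a convenient one. This reduces to two facts that should be isolated explicitly—that the label comparison determining direction is global (so chip $1$ is minimal, and chip $k^n$ maximal, in every tuple it ever enters, regardless of which chips happen to be grouped with it), and that the terminal chip counts are strategy-independent (so the descent to layer $n+1$ is forced). Once these are in hand, the directional monotonicity of the firing rule does the rest, and no case analysis on intermediate configurations is required.
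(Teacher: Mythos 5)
Your argument is correct and complete: chip $1$ is the global minimum, so it is the smallest element of every $k$-tuple it joins and is always routed to the leftmost child, confining it to the left spine, and the strategy-independent chip counts from the unlabeled analysis force it down to $v_{11\dots 1}$; the reflection argument handles $k^n$, and the one-chip-per-vertex property on layer $n+1$ then excludes nontrivial chips from the trivial vertices. The paper itself states this lemma as an import from \cite{MR4887467} without reproving it, but your proof is exactly the standard argument used there, so there is nothing substantive to compare.
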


\section{Landing Order}
\label{sec:landingorder}
\subsection{Definitions}

In our previous papers \cite{inagaki2024chipfiringundirectedbinarytrees, MR4887467, inagaki2025permutationbasedstrategieslabeledchipfiring}, we only considered the vertices the chips traversed. Now, we want to have a more detailed picture. We want to remember not only which vertex each chip passes through but also the relative order of that chip at that vertex.

We assume that during our firing process, we start with firing the root. When all chips at the root are fired, we continue with the next layer, and so on. For a given firing strategy, when the layer $i+1$ is full, suppose chip $c$ ends on a vertex $v_t$ on layer $i+1$ in a relative order $x$. Here $t=t_1t_2\dots t_i$ is the traversal string, and $x$ can have a value between 1 and $k^{n+1-i}$. Then, we define the \textit{landing order} of $c$ on layer $i+1$ as a pair $(t,x)$. For layer 1, we can assume that the traversal string is empty. Given a firing strategy, we denote a chip that ends in landing order $(t,x)$ as $c_{(t,x)}$.

\begin{example}
    For $k=n=2$, each layer has 4 landing orders. On layer 2, the orders are $(1,1)$, $(1,2)$, $(2,1)$, and $(2,2)$. On layer 3, the orders are $(11,1)$, $(12,1)$, $(21,1)$, and $(22,1)$.
\end{example}

We can envision that landing orders are arranged from left to right. Namely, the order $(t_1,x_1)$ is to the left from $(t_2,x_2)$ if and only if $t_1 < t_2$, meaning that vertex $v_{t_1}$ is to the left of $v_{t_2}$, or $t_1 = t_2$ and $x_1 < x_2$, that is, chips at the same vertex are ordered too.

If we number the landing orders from left to right, starting from index 1, we get a number for the landing order $(t,x)$, which we denote as $|(t,x)|$. The number $|(t,x)|$ can be calculated as follows: Subtract one from every digit in $t$, evaluate the result as a $k$-ary number, multiply by $k^{n+1-i}$, where $i+1$ is the layer, then add $x$. 
 
\begin{lemma}
\label{lem:landingordertonumber}
Given $i$ is the length of a traversing string $t$, the index of the landing order $(t, x)$ is $|(t, x)| = \sum_{\ell=1}^i (t_{\ell}-1)k^{n+1-\ell}+x$.
\end{lemma}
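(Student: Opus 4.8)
The plan is to verify directly that the proposed formula for $|(t,x)|$ matches the stated enumeration procedure, which is essentially a mixed-radix (base-$k$) positional-value computation. First I would unpack the description given just before the lemma: the enumeration decrees that landing orders are ordered lexicographically, with the traversal string $t$ as the primary key (ordered as strings, equivalently by the vertex position $v_t$ from left to right) and the relative order $x$ as the secondary key. The claim to prove is that counting how many landing orders lie strictly to the left of $(t,x)$, and adding one, yields exactly $\sum_{\ell=1}^{i}(t_\ell-1)k^{n+1-\ell}+x$.

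The cleanest route is a direct counting argument rather than induction. On layer $i+1$, each traversal string $t=t_1\cdots t_i$ has exactly $k^{\,n+1-i}$ landing orders sitting at its vertex (this is the range of $x$ stated in the definition). The number of vertices strictly to the left of $v_t$ on that layer is precisely the value of the string $t$ read as a base-$k$ number after subtracting $1$ from each digit, namely $\sum_{\ell=1}^{i}(t_\ell-1)k^{\,i-\ell}$; this is the standard fact that the lexicographic rank of a length-$i$ word over the digit set $\{1,\dots,k\}$ equals its value in base $k$ with digits shifted to $\{0,\dots,k-1\}$. Each such vertex contributes $k^{\,n+1-i}$ landing orders all lying to the left of $(t,x)$, and within $v_t$ itself there are $x-1$ landing orders to the left of $(t,x)$. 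Hence
\begin{equation*}
|(t,x)| = \left(\sum_{\ell=1}^{i}(t_\ell-1)k^{\,i-\ell}\right)k^{\,n+1-i} + (x-1) + 1.
\end{equation*}
Distributing the factor $k^{\,n+1-i}$ across the sum turns $k^{\,i-\ell}\cdot k^{\,n+1-i}$ into $k^{\,n+1-\ell}$, and the trailing $(x-1)+1$ collapses to $x$, giving exactly the claimed $\sum_{\ell=1}^{i}(t_\ell-1)k^{\,n+1-\ell}+x$.

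I do not anticipate a genuine obstacle here, as the statement is really a restatement of the base-$k$ indexing convention already spelled out informally in the paragraph preceding the lemma. The one step deserving care is the justification that the number of length-$i$ strings over $\{1,\dots,k\}$ lexicographically preceding $t$ equals $\sum_{\ell=1}^{i}(t_\ell-1)k^{\,i-\ell}$; I would either cite this as the standard mixed-radix ranking identity or, if the paper prefers self-containment, prove it by a one-line induction on $i$ (peeling off the first digit $t_1$, which is preceded by $(t_1-1)k^{\,i-1}$ full blocks of the remaining $k^{\,i-1}$ suffixes, then recursing on $t_2\cdots t_i$). A brief sanity check against the stated examples, for instance confirming that on layer $3$ with $k=n=2$ the orders $(11,1),(12,1),(21,1),(22,1)$ receive indices $1,2,3,4$, would close the argument.
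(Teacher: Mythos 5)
Your argument is essentially the paper's proof: count the landing orders strictly to the left of $(t,x)$ as (number of vertices left of $v_t$) times (landing orders per vertex) plus the within-vertex offset $x-1$, then add one. The only difference is that the paper establishes the vertex-rank identity $\sum_{\ell=1}^{i}(t_\ell-1)k^{i-\ell}$ by an explicit induction on $i$, whereas you cite it as the standard mixed-radix ranking fact; either is fine.

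One caution: the final sanity check you propose would actually expose a discrepancy rather than close the argument. A vertex on layer $i+1$ receives $k^{n-i}$ chips (each child of the root gets $k^{n-1}$), so the per-vertex factor should be $k^{n-i}$, yielding $|(t,x)|=\sum_{\ell=1}^{i}(t_\ell-1)k^{n-\ell}+x$. For $k=n=2$ this assigns $(12,1)$ the index $2$ as your check requires, while the formula with $k^{n+1-\ell}$ gives $3$; likewise the paper's own worked example $|(31,2)|=20$ matches the $k^{n-\ell}$ version. You inherited the factor $k^{n+1-i}$ from the range of $x$ stated in the definition, which is the same off-by-one present in the paper's proof, so your reasoning is faithful to the statement as written --- but the derivation and the verification step in your proposal are mutually inconsistent, and the exponent in the statement appears to be off by one.
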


\begin{proof}
    Let $t=t_1t_2\dots t_i$ be a traversal string. First, we prove via induction on the length $i$ of $t$ that $v_{t}$ is the ($\sum_{\ell=1}^i (t_{\ell}-1)k^{i-\ell}+1$)st leftmost vertex in layer $i+1$.

    For our base case, let the length of the traversing string $t$ be $i = 1$. We know that $v_{t}$ is the $t_1$th leftmost vertex in layer $i+1=2$. The formula gives us
    \[\sum_{\ell=1}^1 (t_{\ell}-1)k^{i-\ell}+1 = (t_1 - 1)k^{1-1}+1 = t_1,\] confirming the base case.
    
    For our inductive step, we assume that $v_{t}$ is the ($\sum_{\ell=1}^i (t_{\ell}-1)k^{i-\ell}+1$)th leftmost vertex in layer $i+1$. There are exactly $k\sum_{\ell=1}^i (t_{\ell}-1)k^{i-\ell}$ vertices in layer $i+2$ that are children of vertices in layer $i+1$ but left of the children of $v_{t}$. Then observe that there are $(t_{i+1}-1)$ vertices that are children of $v_{t}$ and to the left of $v_{t_1t_2\dots t_it_{i+1}}$. Therefore there are a total of \[k\sum_{\ell=1}^i (t_{\ell}-1)k^{i-\ell} + (t_{i+1}-1) = \sum_{\ell=1}^{i+1} (t_{\ell}-1)k^{i+1-\ell}\]
    vertices to the left of $v_{t_1t_2\dots t_it_{i+1}}$. This completes the inductive step.

    Recall from Section \ref{sec:prelim} that for any $m \in \mathbb{N}$, the chip-firing process sends exactly $k^{n+1-m}$ chips to each vertex in layer $m$. From this fact along with the result from the previous paragraph, we obtain that for $t = t_1t_2\dots t_i$, there are exactly $k^{n+1-i}\sum_{\ell=1}^i(t_{\ell}-1)k^{i-\ell} = \sum_{\ell=1}^i (t_{\ell}-1)k^{n+1-\ell}$ chips that are to the left of any chips sent to $v_{t_1t_2\dots t_i}$. Then observe that there are $x-1$ chips to the left of the $x$th smallest chip on $v_t$. Thus in total there are exactly $\sum_{\ell=1}^i (t_{\ell}-1)k^{n+1-\ell}+x-1$ chips on layer $i+1$ that are before the $x$th smallest chip on $v_t$ in the landing order. Thus $|(t, x)| = \sum_{\ell=1}^i (t_{\ell}-1)k^{n+1-\ell}+x$.
\end{proof}

\begin{example}
    For $k=n=3$, consider the landing order $(31,2)$ on layer 3. Subtracting 1 from every digit of 31, we get 20. In ternary, it equals 6. Now, we are on layer 3, so we multiply by $3^{n+1-3}$ to get to 18. Then, we add $x=2$ and get that $|(31,2)| = 20$.
\end{example}

\subsection{Symmetries}

Our graph has a natural symmetry: reflection along a vertical line passing through the root. Let us denote the symmetric landing point to $(t,x)$ as $R(t,x)$. When we look at just the traversal string $t$, we denote its reflection as $R(t)$.

\begin{lemma}
    If $t$ is a traversal string to a vertex on layer $i+1$, then 
    \[R(t,x) = \left((k+1-t_1)(k+1-t_2)\dots (k+1-t_{i}), k^{n-i} + 1 -x\right).\]
\end{lemma}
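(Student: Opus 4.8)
The plan is to establish the reflection formula by tracking what the left-right mirror symmetry does to both coordinates of a landing order independently. The key observation is that reflection along the vertical line through the root is an automorphism of the directed $k$-ary tree that carries the $j$th leftmost child of any vertex to the $(k+1-j)$th leftmost child of the reflected parent. This immediately explains the traversal-string part: each digit $t_\ell$ records which child we descended to at step $\ell$, so under reflection that choice becomes $k+1-t_\ell$, giving $R(t) = (k+1-t_1)(k+1-t_2)\cdots(k+1-t_i)$.

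First I would verify the traversal-string component cleanly, either by a direct appeal to the tree automorphism or by a short induction on the length $i$ mirroring the induction in Lemma~\ref{lem:landingordertonumber}: if $v_t$ is the $m$th leftmost vertex in layer $i+1$, then its reflection is the $m$th vertex counted from the right, and descending to the $t_{i+1}$th child commutes with reflection by sending it to the $(k+1-t_{i+1})$th child of the reflected vertex. Next I would handle the relative-order coordinate $x$. Here the point is that reflection reverses the left-to-right ordering of chips. Since each vertex on layer $i+1$ receives exactly $k^{n-i}$ chips (by the count recalled from Section~\ref{sec:unlabeled}, specializing $k^{n+1-(i+1)} = k^{n-i}$), the chip that is $x$th from the left among those $k^{n-i}$ chips becomes the $x$th from the right after reflection, i.e.\ the $(k^{n-i}+1-x)$th from the left. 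This yields the second coordinate $k^{n-i}+1-x$.

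The cleanest framing is to argue that reflection is an involution on configurations that commutes with the firing dynamics: because the firing rule sends the $i$th smallest label to the $i$th leftmost child, reflecting the tree and simultaneously keeping the labels fixed transforms a valid firing sequence into another valid firing sequence on the mirrored tree, with left-to-right order reversed at every vertex. Thus the combinatorial data $(t,x)$ of a chip maps exactly to $(R(t), k^{n-i}+1-x)$. I would want to state this commutation carefully enough that both coordinate computations follow as corollaries rather than being verified separately.

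The main obstacle I anticipate is making the ordering claim for $x$ fully rigorous rather than merely intuitive: one must be careful that ``relative order at $v_t$'' is defined with respect to the full set of $k^{n-i}$ chips that ultimately land at $v_t$, and confirm that reflection genuinely reverses this order uniformly across the whole layer and is consistent with the global left-to-right indexing $|(t,x)|$ from Lemma~\ref{lem:landingordertonumber}. A useful sanity check, which I would include, is to confirm that $|R(t,x)| = k^{n+1} + 1 - |(t,x)|$, i.e.\ that reflection reverses the global index on layer $i+1$; this ties the two coordinate formulas together and guards against an off-by-one error in either component.
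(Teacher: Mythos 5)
Your proposal is correct and follows essentially the same route as the paper's (very brief) proof: the traversal string reflects digit by digit to $(k+1-t_\ell)$, and since each vertex on layer $i+1$ holds $k^{n-i}$ chips, the relative order $x$ reflects to $k^{n-i}+1-x$. One small slip in your proposed sanity check: layer $i+1$ carries $k^n$ chips in total, so the global index should reverse as $|R(t,x)| = k^{n}+1-|(t,x)|$, not $k^{n+1}+1-|(t,x)|$.
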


\begin{proof}
    The traversal string that is symmetric to $t$ is $t' = (k+1-t_1)(k+1-t_2)\dots (k+1-t_{i})$. Each vertex on layer $i+1$ has $k^{n-i}$ orders. The relative order symmetric to $x$ is $k^{n-i} + 1 -x$.
\end{proof}

This symmetry means that if we can calculate the smallest chip that can land at a particular vertex, then we can also calculate the largest chip.

Let us denote the smallest chip that can land at $(t,x)$ as $a(t,x)$ and the largest as $b(t,x)$.

\begin{corollary}\label{cor:btx}
    We have
    \[b(t,x) = k^{n-i}+1 - a(R(t,x)).\]
\end{corollary}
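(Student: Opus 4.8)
The plan is to obtain the corollary directly from the reflection symmetry recorded in the preceding lemma, using that the chip-firing game is invariant under reflecting the tree across the vertical line through the root. First I would make precise the sense in which this reflection is a symmetry of the game: it sends the $j$th leftmost child of every vertex to its $(k+1-j)$th leftmost child, and hence sends each landing order $(t,x)$ on layer $i+1$ to $R(t,x)$, exactly as computed in that lemma. Because the firing rule --- the $m$th smallest of the fired chips travels to the $m$th leftmost child --- is itself reversed by the reflection, every legitimate firing strategy for the game is carried to a legitimate firing strategy for the reflected game, and conversely. Consequently the chips that can occupy the landing order $(t,x)$ are precisely the images under reflection of the chips that can occupy $R(t,x)$, and under this correspondence the roles of \emph{smallest} and \emph{largest} are interchanged.

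I would then convert this interchange into the stated identity by reading off the constant that the reflection lemma already supplies. That lemma shows that a relative order $x$ on a layer-$(i+1)$ vertex, which carries $k^{n-i}$ chips, is sent to $k^{n-i}+1-x$; this same reversal, applied to the extremal landing chips so that the chip which is smallest at $R(t,x)$ becomes the chip which is largest at $(t,x)$, yields $b(t,x)=k^{n-i}+1-a(R(t,x))$ once the explicit form of $R(t,x)$ from the lemma is inserted. Since the reflection lemma has already been established, this last step is a direct substitution, which is consistent with the result being phrased as a corollary rather than a standalone lemma.

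The main obstacle is the first step: rigorously verifying that the reflection carries valid firing sequences to valid firing sequences, so that the exchange of smallest and largest is a genuine consequence rather than merely a picture. The delicate point is that the firing rule is orientation-sensitive, since it references the leftmost child, so one must check that reflecting both the tree and the induced ordering of the fired chips preserves the rule at every vertex and at every stage of the process, layer by layer. Once this invariance is in hand, the passage from the relative-order reversal $x\mapsto k^{n-i}+1-x$ furnished by the reflection lemma to the corresponding statement about the extremal chips is immediate, and the corollary $b(t,x)=k^{n-i}+1-a(R(t,x))$ follows.
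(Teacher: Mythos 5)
Your overall strategy is the right one and matches the paper's intent: the corollary is meant to follow immediately from the reflection lemma, by observing that reflecting the tree and simultaneously relabeling the chips is a symmetry of the game that carries firing strategies to firing strategies and exchanges the roles of smallest and largest. The genuine flaw is in your final substitution step, where you apply ``this same reversal'' $x \mapsto k^{n-i}+1-x$ to the chip labels themselves. That reversal is the involution on \emph{relative orders} within a single vertex of layer $i+1$, which holds only $k^{n-i}$ chips; the chip labels range over $1,2,\dots,k^n$ globally, so the relabeling induced by the reflection is $c \mapsto k^n+1-c$. Carried out correctly, your argument gives
\[
b(t,x) = k^n + 1 - a(R(t,x)),
\]
not $k^{n-i}+1-a(R(t,x))$.

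The displayed formula in the corollary as printed is in fact a misprint, and by borrowing its constant you have ``derived'' the misprint rather than the true statement. You can see the inconsistency inside the paper itself: the proof of Proposition~\ref{prop:Layer2} invokes this corollary in the form $b(j,x)=k^n+1-a(R(j,x))$, and the formula of Lemma~\ref{lem:smalllargechip} is compatible only with the constant $k^n+1$. A concrete check: for $k=n=2$ and $(t,x)=(1,1)$ we have $R(1,1)=(2,2)$ and $a(2,2)=4$, so $b(1,1)=k^n+1-4=1$ (chip $1$ is always the smallest chip at the leftmost child), whereas $k^{n-i}+1-4=-1$ is absurd. Had you run your own symmetry argument with the correct global involution, it would have produced $k^n+1$ and exposed the discrepancy --- the constant must be the total number of chips plus one, independent of the layer, and a proof from symmetry is precisely the kind of argument that should catch this.
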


When we start with $k^n$ chips at the root and $t$ is a traversal string of length $n$, we call the interval $[a(t, 1), b(t, 1)]$ the \textit{landing range} of chips that can end up at $v_t$.

\section{Layer 2}
\label{sec:layer2}
\subsection{Point-wise Domination}

We can define a structure of a poset on landing orders. We start with the partial order on traversal strings of the same length. We say that $t'$ dominates $t$, denoted as $t \preceq t'$, if and only if each digit of $t'$ is equal to or greater than the corresponding digit of $t$. We say that $(t',x')$ \textit{dominates} $(t,x)$, denoted as $(t,x) \preceq (t',x')$, if and only if $t \preceq t'$ and $x \leq x'$. The symbol $\prec$ is used to imply that the object on the right dominates the object on the left, and they are not equal. We say that a configuration of chips \textit{respects the domination order} if for each $(t, x)$ and $(t', x')$ such that $(t, x) \prec (t', x')$, then the chip that lands at $(t, x)$ is smaller than the chip that lands at $(t', x')$.

In general, if $(t,x) \prec (t',x')$, then the chip that lands at $(t, x)$ does not need to be smaller than the one that lands at $(t', x')$, as the following example shows.

\begin{example}
    Consider the stable configuration 13572648 for $k=2$ and $n=3$. It has chip 3 at the vertex $v_{221}$ and chip 4 at the vertex $v_{121}$. On the other hand, $(121, 1) \prec (221, 1)$. So, the terminal configuration does not respect the domination order.
\end{example}

However, chips on layer 2 respect the domination order.  Let $j$ and $j'$ be the traversal strings corresponding to the $j$th and $j'$th child of the root. 

\begin{proposition}
\label{prop:smallerchip}
    Let $j$ and $j'$ be traversal strings of length $1$. If $(j,x) \prec (j',x')$, then, for any firing strategy, the chip that lands at $(j, x)$ is smaller than the chip that lands at $(j',x')$:
    \[c_{(j,x)} < c_{(j',x')}.\]
\end{proposition}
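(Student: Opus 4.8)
The plan is to split the hypothesis $(j,x)\prec(j',x')$ into two cases according to whether $j=j'$ or $j<j'$. If $j=j'$, then the domination forces $x<x'$, and the conclusion is immediate from the definition of relative order: on the single vertex $v_j$ the chip of relative order $x$ is by definition the $x$th smallest among the chips that land there, so $c_{(j,x)}<c_{(j,x')}$ whenever $x<x'$. The substance of the proposition is therefore the case $j<j'$ (with $x\le x'$), in which the two chips sit on different children of the root.

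For this case I would analyze the firings of the root directly. By the unlabeled analysis in Section~\ref{sec:unlabeled}, the root fires exactly $k^{n-1}$ times, and each firing contributes exactly one chip to each child. Index the firings $f=1,\dots,k^{n-1}$, and for each $f$ let $S_f$ be the $k$-set of chips fired; the firing rule sends the $i$th smallest element of $S_f$ to child $i$. The crucial observation is a \emph{within-firing} monotonicity: since $j<j'$, the chip $S_f$ sends to child $j$ is the $j$th smallest of $S_f$ while the chip it sends to child $j'$ is the $j'$th smallest, so the former is strictly smaller than the latter. Writing $a_f$ and $b_f$ for the chips that firing $f$ sends to $v_j$ and $v_{j'}$ respectively, we thus obtain the pointwise domination $a_f<b_f$ for every $f$. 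Moreover the $a_f$ are pairwise distinct and the $b_f$ are pairwise distinct, since each chip is fired from the root exactly once.

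It then remains to upgrade this pointwise domination to a comparison of order statistics. The chips landing on $v_j$ are exactly $\{a_f\}_f$ and those landing on $v_{j'}$ are exactly $\{b_f\}_f$, and $c_{(j,x)}$ (respectively $c_{(j',x')}$) is the $x$th (respectively $x'$th) smallest of these. I would isolate and prove the elementary lemma that if $a_f<b_f$ for all $f$, then the $x$th smallest of $\{a_f\}$ is strictly less than the $x$th smallest of $\{b_f\}$: letting $v$ denote the $x$th smallest value of $\{b_f\}$ and $F$ the set of the $x$ indices realizing the $x$ smallest $b$-values, each $f\in F$ satisfies $a_f<b_f\le v$, so at least $x$ of the $a_f$ lie strictly below $v$, which forces the $x$th smallest $a$-value to be below $v$ as well. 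Since $x\le x'$ and the $b$-order statistics are nondecreasing in their index, this yields $c_{(j,x)}<v\le c_{(j',x')}$, as desired. I expect this final counting step---translating coordinatewise domination of two unordered families into domination of their sorted order statistics---to be the only genuine obstacle; everything else is bookkeeping about how the root distributes chips.
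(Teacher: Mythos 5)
Your proposal is correct and follows essentially the same route as the paper: both arguments rest on the within-firing monotonicity (the chip a given root-firing sends to child $j$ is smaller than the one it sends to child $j'$) followed by a counting argument showing that at least $x$ chips at $v_j$ lie below $c_{(j',x')}$. The only cosmetic difference is that the paper reindexes the fired $k$-tuples so that their $j'$-components are sorted, whereas you keep an arbitrary indexing and extract the comparison of order statistics as a standalone lemma --- your version is, if anything, slightly cleaner to verify.
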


\begin{proof}
Suppose at the $i$th firing of the root we fire $k$ chips $(a_{i, 1}, a_{i, 2}, \dots a_{i, k})$, where the set of chips is in order, that is for each $i \in [k^{n-1}]$ and $y < y'$, we have that $a_{i, y} < a_{i, y'}$. We can reorder $k$-tuples that are fired from the root in any way. Suppose we number $k$-tuples at the root by the order of chips landing at vertex $v_{j'}$. That is, we number them so that $a_{i, j'} = c_{(j, i')}$ for each $i \in [k^{n-1}]$.

Suppose $(j,x) \prec (j',x')$. We start with $j' = j$. Then, given a firing strategy $c_{(j,x)} < c_{(j,x')}$, by definition of the landing order.

Now, suppose $j < j'$. For any $j < j'$ and $x \leq x'$, we have
\[a_{x, j} \leq a_{x', j} < a_{x', j'}= c_{(j', x')}.\]
It follows that the vertex $v_{j}$ has at least $x'$ (and hence at least $x$) chips that are smaller than $c_{(t',x')}$, and the statement follows.
\end{proof}

The proposition above allows us to bound the smallest chip at a given landing on layer 2.

\begin{corollary}
\label{cor:smallestbound}
    If $(j,x)$ is a landing on layer 2, then the chip at this landing is at least $jx$:
    \[c_{(j,x)} \geq jx.\]
\end{corollary}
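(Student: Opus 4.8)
The plan is to turn the point-wise domination result of Proposition~\ref{prop:smallerchip} into a counting argument: since labels are drawn from $\{1,2,\dots,k^n\}$, to conclude $c_{(j,x)}\geq jx$ it suffices to exhibit at least $jx-1$ distinct chips whose labels are strictly smaller than $c_{(j,x)}$.

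First I would identify the landing orders strictly dominated by $(j,x)$. By the definition of the domination order, a layer-$2$ landing order $(j',x')$ satisfies $(j',x')\prec(j,x)$ exactly when $1\leq j'\leq j$, $1\leq x'\leq x$, and $(j',x')\neq(j,x)$. Every such pair is a legitimate landing order on layer $2$, because $j'\leq j\leq k$ and $x'\leq x\leq k^{n-1}$, so the constraints $j'\in\{1,\dots,k\}$ and $x'\in\{1,\dots,k^{n-1}\}$ are automatically met. Counting these pairs gives exactly $jx$ landing orders $(j',x')$ with $(j',x')\preceq(j,x)$, and removing $(j,x)$ itself leaves $jx-1$ landing orders satisfying $(j',x')\prec(j,x)$.

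Next I would apply Proposition~\ref{prop:smallerchip} to each of these $jx-1$ landing orders, which yields $c_{(j',x')}<c_{(j,x)}$ for every $(j',x')\prec(j,x)$. Because each chip occupies exactly one landing order in a fixed firing strategy, the chips $c_{(j',x')}$ over these $jx-1$ distinct landing orders are themselves distinct. Hence there are at least $jx-1$ distinct chips whose labels are strictly smaller than the label $c_{(j,x)}$. Since the labels form the set $\{1,2,\dots,k^n\}$, having at least $jx-1$ labels below it forces $c_{(j,x)}\geq jx$, completing the argument.

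I do not anticipate a genuine obstacle here, as the result follows directly once the dominated landing orders are counted; the only points demanding care are verifying that all $jx$ dominated pairs are valid layer-$2$ landing orders and that distinct landing orders carry distinct chips, so the $jx-1$ smaller chips are genuinely distinct rather than merely $jx-1$ inequalities involving possibly repeated values.
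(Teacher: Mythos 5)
Your proposal is correct and follows essentially the same route as the paper: count the $jx-1$ landing orders strictly dominated by $(j,x)$ and apply Proposition~\ref{prop:smallerchip} to conclude each carries a strictly smaller chip, forcing $c_{(j,x)}\geq jx$. The paper's proof is just a terser version of the same argument; your extra care in checking that the dominated pairs are valid layer-$2$ landing orders and that distinct landing orders carry distinct chips is sound but left implicit in the paper.
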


\begin{proof}
    If chip $c$ lands on Layer 2 at $(j, x)$, then the chip that lands at $(j',x') \prec (j,x)$ has a smaller value by Proposition~\ref{prop:smallerchip}. The number of such landings is $jx-1$.
\end{proof}

Moreover, if we have a distribution of chips on layer 2 that respects the domination order, it is possible to find a firing strategy that provides that order.

\begin{proposition}\label{prop:LandingOrderLayer2}
    If the distribution of chips on layer 2 respects the domination order, then a firing strategy at the root providing this distribution of chips exists.
\end{proposition}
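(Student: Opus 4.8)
The plan is to construct the required firing strategy explicitly by reading the groups of chips to fire directly off the given distribution. I would arrange the target distribution as a $k \times k^{n-1}$ array $M$ whose entry in row $j$ and column $x$ is the chip $c_{(j,x)}$ assigned to landing order $(j,x)$; since the distribution places each of the $k^n$ chips at exactly one landing order on layer 2, the entries of $M$ are precisely the chips $1, 2, \dots, k^n$, each appearing once. The first step is to record what respecting the domination order means for this array: because $(j,x) \prec (j',x)$ whenever $j < j'$ and $(j,x) \prec (j,x')$ whenever $x < x'$, the array $M$ is strictly increasing down each column and along each row.

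Next I would define the strategy: fire each column of $M$ as a single $k$-tuple. That is, for each $x \in [k^{n-1}]$ the root fires the $k$ chips $c_{(1,x)}, c_{(2,x)}, \dots, c_{(k,x)}$ in one firing. This is a legitimate strategy because all $k^n$ chips begin at the root, and the columns of $M$ partition these chips into $k^{n-1}$ disjoint $k$-tuples; these $k^{n-1}$ firings therefore use each chip exactly once and empty the root. By the order-independence of firings noted after Figure~\ref{fig:directedex}, the order in which the columns are fired is irrelevant.

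It then remains to check that this strategy produces the desired distribution. Because each column of $M$ is strictly increasing, the $j$th smallest chip in the tuple fired at step $x$ is $c_{(j,x)}$, so by the firing rule it is sent to the $j$th leftmost child. Hence the child $v_j$ receives exactly the chips $c_{(j,1)}, c_{(j,2)}, \dots, c_{(j,k^{n-1})}$, namely row $j$ of $M$. Since row $j$ is strictly increasing, $c_{(j,x)}$ is the $x$th smallest chip on $v_j$ once layer 2 is full, which is precisely the landing order $(j,x)$. Thus the resulting distribution agrees with the target.

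I do not expect a genuine obstacle here: the content of the argument is simply the observation that the columns of a distribution respecting the domination order are already sorted $k$-tuples, so they can serve directly as firings. The only care needed is in the final bookkeeping step, where one must separately invoke the two monotonicities of $M$ — column-monotonicity to route each chip to the correct child, and row-monotonicity to make the relative order within each child match the intended $x$. Conflating these two is the easiest place to slip.
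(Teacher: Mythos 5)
Your proposal is correct and matches the paper's own argument: both fire the $k$ chips assigned to a fixed relative order $x$ as a single $k$-tuple, using column-monotonicity to route each chip to the right child. Your write-up is somewhat more explicit than the paper's in checking that row-monotonicity gives the correct relative order $x$ within each child, but the approach is the same.
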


\begin{proof}
    Consider a distribution of chips on layer 2 that respects the domination order. For a given $x$, consider a $k$-tuple of chips that are in the order $(j,x)$ for any $j$. Here, $j$ is the traversal string corresponding to the $j$th child of the root. Now, we take the set of chips in this $k$-tuple and use it in one fire at the root. After this fire, the chip of order $j$ in this tuple will go to the $j$th child. Thus, this chip goes to the correct child. We arrange all fires at the root like this, and, as a result, each child of the root gets the set of chips that was given in the distribution.
\end{proof}

The proposition allows us to show that a particular chip $c$ can land at $(j,x)$ on layer 2 by providing a distribution of chips that respects the domination order with chip $c$ at $(j,x)$.

\begin{corollary}
\label{cor:smallestexist}
    Chip $jx$ can land at $(j,x)$.
\end{corollary}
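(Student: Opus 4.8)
The goal is to show that chip $jx$ can actually land at the landing order $(j,x)$ on layer 2. Corollary~\ref{cor:smallestbound} already established that $c_{(j,x)} \geq jx$ for every firing strategy, so this is the matching existence statement showing the lower bound $jx$ is achieved. The plan is to exhibit an explicit distribution of chips on layer 2 that respects the domination order and places chip $jx$ precisely at $(j,x)$; then Proposition~\ref{prop:LandingOrderLayer2} immediately guarantees a firing strategy realizing it.

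The natural candidate is the distribution that is forced by the domination order at its tightest, namely the one in which the chip at each landing order equals its own index. Concretely, I would assign to each landing order $(j',x')$ on layer 2 the chip $|(j',x')|$, using the indexing from Lemma~\ref{lem:landingordertonumber}. First I would verify that this is a genuine distribution: the index map $(j',x') \mapsto |(j',x')|$ is a bijection from the $k^n$ landing orders on layer 2 onto $\{1,2,\dots,k^n\}$, so each chip $1,\dots,k^n$ is used exactly once. Next I would check that this distribution respects the domination order: if $(s,y) \prec (s',y')$ then $(s,y)$ precedes $(s',y')$ in the left-to-right ordering of landing orders, hence $|(s,y)| < |(s',y')|$, which is exactly the required inequality between the chips placed there. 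Finally, I would confirm that at the specific landing order $(j,x)$, the chip placed is $|(j,x)| = (j-1)k^n + \dots$; but more directly, Corollary~\ref{cor:smallestbound}'s proof counts exactly $jx-1$ landings dominated by $(j,x)$, and in this index-assignment the chip at $(j,x)$ is one more than that count, i.e., chip $jx$. So this distribution places chip $jx$ at $(j,x)$.

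With such a distribution in hand, Proposition~\ref{prop:LandingOrderLayer2} applies verbatim: since the distribution respects the domination order, there exists a firing strategy at the root producing it, and under that strategy chip $jx$ lands at $(j,x)$. This completes the argument.

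The only delicate point is making sure the assignment I choose is consistent enough to simultaneously respect domination everywhere and pin chip $jx$ to $(j,x)$; I expect the main obstacle to be the bookkeeping that the count of landing orders strictly dominated by $(j,x)$ is exactly $jx-1$, matching the value $jx$ I want to place there. This is essentially the same count performed in the proof of Corollary~\ref{cor:smallestbound}, so I would simply invoke or lightly re-derive that count rather than recompute it, keeping the existence argument short.
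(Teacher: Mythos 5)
Your overall strategy is the correct one, and it is the paper's: exhibit a layer-2 distribution that respects the domination order and has chip $jx$ at $(j,x)$, then invoke Proposition~\ref{prop:LandingOrderLayer2}. However, the specific distribution you propose does not achieve this. Assigning to each landing order $(j',x')$ the chip $|(j',x')|$ does respect the domination order (the left-to-right order is a linear extension of $\preceq$), but the chip it places at $(j,x)$ is $|(j,x)|$, which is one more than the number of landing orders strictly to the \emph{left} of $(j,x)$ --- not one more than the number of landing orders strictly \emph{dominated} by $(j,x)$. These two counts differ because $\preceq$ is only a partial order: any $(j',x')$ with $j'<j$ and $x'>x$ lies to the left of $(j,x)$ without being dominated by it. Concretely, for $k=n=2$ your assignment puts chip $3$ at $(2,1)$, whereas the statement requires chip $jx=2$ there. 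So the final step of your argument, where you equate the chip at $(j,x)$ with $jx$ via the count of dominated orders, is false, and the construction must be changed.

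The repair is exactly what the paper does: place chips $1,\dots,jx-1$ on the $jx-1$ landing orders strictly dominated by $(j,x)$ (left to right among those), place chip $jx$ at $(j,x)$ itself, and fill the remaining landing orders with the remaining chips from left to right. Because the set of landing orders dominated by $(j,x)$ is a down-set of the poset, no order outside it is dominated by one inside it, so this modified distribution still respects the domination order while pinning chip $jx$ to $(j,x)$; Proposition~\ref{prop:LandingOrderLayer2} then yields the required firing strategy.
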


\begin{proof}
    Consider the following distribution of chips. Put chips from $1$ to $jx-1$ inclusive at the landing orders dominated by $(j,x)$ from left to right. Put chip $jx$ at the landing $(j,x)$. Put the rest of the chips on the remaining orders from left to right. This distribution of chips respects the domination order and, by Proposition~\ref{prop:LandingOrderLayer2}, a firing strategy at the root providing this distribution of chips exists.
\end{proof}

Now we are ready for our bounds.

\begin{proposition}
\label{prop:Layer2}
    The smallest chip that can land in the landing order $(t,x)$ on layer 2 is 
    \[a(j,x) = jx\]
    and the largest is
    \[b(j,x) = k^n+1 -(k^{n-1} +1-x)(k+1-j).\]
\end{proposition}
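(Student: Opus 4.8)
The plan is to prove the two formulas separately, establishing the smallest-chip formula first and then deducing the largest-chip formula from it via the reflection symmetry of the tree. For the smallest chip, essentially all the work has already been done in the preceding corollaries, so I would simply assemble them. Corollary~\ref{cor:smallestbound} gives the lower bound $c_{(j,x)} \geq jx$ under every firing strategy, hence $a(j,x) \geq jx$; Corollary~\ref{cor:smallestexist} exhibits a strategy that places chip $jx$ precisely at $(j,x)$, hence $a(j,x) \leq jx$. Combining the two inequalities yields $a(j,x) = jx$.

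For the largest chip I would pass through the vertical reflection of the tree. The key observation to record is that reflecting across the line through the root while simultaneously relabeling each chip $c$ by $k^n+1-c$ sends a valid firing strategy to another valid firing strategy: the rule ``$i$th smallest chip to the $i$th leftmost child'' becomes ``$i$th largest chip to the $i$th rightmost child,'' which is again the correct rule after the labels are reversed. It follows that chip $c$ can land at $(j,x)$ if and only if chip $k^n+1-c$ can land at $R(j,x)$, so that the largest chip at $(j,x)$ is the mirror image of the smallest chip at $R(j,x)$; this is exactly the symmetry recorded in Corollary~\ref{cor:btx}, giving
\[b(j,x) = k^n + 1 - a(R(j,x)).\]

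It then remains to evaluate $a(R(j,x))$ using the smallest-chip formula just proved. By the reflection lemma specialized to $i=1$, we have $R(j,x) = (k+1-j,\, k^{n-1}+1-x)$, and I would explicitly note that this is again a layer-$2$ landing order with length-one traversal string $k+1-j \in \{1,\dots,k\}$ and relative order $k^{n-1}+1-x \in \{1,\dots,k^{n-1}\}$, so that the formula $a(j',x') = j'x'$ legitimately applies to it. Substituting $a(R(j,x)) = (k+1-j)(k^{n-1}+1-x)$ yields
\[b(j,x) = k^n + 1 - (k^{n-1}+1-x)(k+1-j),\]
as claimed. Since both halves reduce to results already in hand, I do not expect a genuine obstacle; the only points requiring care are verifying that $R(j,x)$ is a layer-$2$ landing order of the same form before reusing the $a$-formula, and stating cleanly why the reflect-and-relabel map preserves validity of a strategy, since that is what licenses converting $a$ at a point into $b$ at its mirror image.
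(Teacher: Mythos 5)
Your proposal is correct and follows essentially the same route as the paper: $a(j,x)=jx$ is obtained by combining Corollary~\ref{cor:smallestbound} with Corollary~\ref{cor:smallestexist}, and $b(j,x)$ is then derived from Corollary~\ref{cor:btx} together with the reflection formula $R(j,x)=(k+1-j,\,k^{n-1}+1-x)$. Your extra remarks --- justifying why the reflect-and-relabel map preserves validity of a strategy, and checking that $R(j,x)$ is again a layer-2 landing order --- are sound points of care that the paper leaves implicit.
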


\begin{proof}
    Combining Corollary~\ref{cor:smallestbound} and~\ref{cor:smallestexist} we get that $a(j,x) = jx$.

    We can find $b(j, x)$ by applying Corollary \ref{cor:btx}, which tells us that $b(j, x) = k^n+1-a(R(j, x)) = k^n+1-a(k+1-j, k^{n-1}+1-x)$.
\end{proof}

\section{Smallest Chip at a Given Landing}
\label{sec:smallestchip}
Now, we calculate the smallest chip that can land at a particular landing order. By symmetry, we get the largest chip. We start with notation. We will need a product of the digits in the traversal string $t = t_1 t_2 \ldots t_i$. To make sure we never confuse the product with the string itself, we will always use $\prod$ for the product.

\begin{lemma}
\label{lem:smalllargechip}
    The smallest chip $c$ that can land in the landing order $(t,x)$, where $t = t_1t_2 \dots t_i$ is 
    \[a(t,x) = x \prod_{j=1}^i t_j,\]
    while the largest chip is 
    \[b(t,x) = k^n +1 -  (k^{n-i} +1 - x) \prod_{j=1}^i (k+1-t_j).\]
\end{lemma}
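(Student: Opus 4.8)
The plan is to prove the formula for $a(t,x)$ by induction on the length $i$ of the traversal string $t$, and then to derive the formula for $b(t,x)$ by symmetry using Corollary~\ref{cor:btx}. The base case $i=1$ is exactly Proposition~\ref{prop:Layer2}, which gives $a(j,x)=jx=x\prod_{j=1}^1 t_j$, so no work is needed there. The heart of the argument is the inductive step, and the natural idea is to relate a landing order $(t,x)$ on layer $i+1$ to a landing order on layer $i$ at the parent vertex. Writing $t=t't_i$ where $t'=t_1\dots t_{i-1}$ is the parent's string, I want to say that the $x$th chip landing at $v_t$ (the $t_i$th child of $v_{t'}$) corresponds, one layer up, to some landing order $(t',y)$ at the parent, and then use Proposition~\ref{prop:smallerchip}-style reasoning locally at $v_{t'}$: when $v_{t'}$ fires, its chips are distributed to its $k$ children exactly as the root distributes chips to layer 2, so the smallest-chip analysis of layer 2 applies verbatim to the chips sitting on $v_{t'}$.

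\medskip

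Concretely, the key step is to establish a recursion of the form $a(t',t_i;\,x) = a\bigl(t',\, t_i\cdot x\bigr)$ at the parent level — more precisely, that the smallest chip that can reach $(t,x)=(t't_i,x)$ equals the smallest chip that can reach the parent landing order $(t', t_i x)$, i.e. $a(t't_i, x) = a(t', t_i x)$. This is because Proposition~\ref{prop:Layer2} applied to the local firing at $v_{t'}$ tells us that to place a chip in relative order $x$ at the $t_i$th child, the chip must occupy (at best) relative order $t_i x$ among the chips that arrive at $v_{t'}$: by Corollary~\ref{cor:smallestbound} specialized to the parent, the chip landing at the $t_i$th child in order $x$ is at least the $(t_i x)$th smallest chip arriving at $v_{t'}$. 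Conversely, the existence direction comes from stitching together the layer-2 existence argument (Corollary~\ref{cor:smallestexist} and Proposition~\ref{prop:LandingOrderLayer2}) at each level: one constructs, layer by layer, a firing strategy that drives the globally smallest eligible chips down the path described by $t$, keeping relative order $t_\ell x$-type invariants at each node. Unfolding the recursion $a(t't_i,x)=a(t',t_ix)$ down to the root gives
\[
a(t,x) = a\bigl(\varnothing,\; x\,t_1 t_2\cdots t_i\bigr) = x\prod_{j=1}^i t_j,
\]
since at the root (empty string) the smallest chip in relative order $m$ is simply $m$.

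\medskip

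For the largest chip, I would avoid a second induction entirely and instead invoke the reflection symmetry. By Corollary~\ref{cor:btx}, $b(t,x) = k^{n-i}+1 - a(R(t,x))$ once we account for the correct layer; more precisely, using the reflection formula $R(t,x)=\bigl((k+1-t_1)\cdots(k+1-t_i),\,k^{n-i}+1-x\bigr)$ together with $b(t,x)=k^n+1-a(R(t,x))$, substituting the just-proved formula for $a$ at the reflected landing order yields
\[
b(t,x) = k^n+1 - (k^{n-i}+1-x)\prod_{j=1}^i (k+1-t_j),
\]
which is exactly the claimed expression. This step is purely a substitution and requires only that the formula for $a$ has been established for all landing orders on layer $i+1$.

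\medskip

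The main obstacle I anticipate is making the recursion $a(t't_i,x)=a(t',t_ix)$ rigorous, specifically the existence (upper-bound) direction. The lower bound — that no chip smaller than $x\prod t_j$ can reach $(t,x)$ — follows cleanly by counting dominated landing orders at each level via Proposition~\ref{prop:smallerchip}. The delicate part is verifying that a single global firing strategy can simultaneously realize the optimal relative order at every vertex along the path: the local layer-2 existence results guarantee a good fire at each individual vertex, but one must check these local choices are mutually consistent and that the chips not on the path can always be parked in positions that respect the local domination orders without blocking the path. I expect this to require an explicit inductive construction of the firing strategy (analogous to the proof of Corollary~\ref{cor:smallestexist}), carefully tracking which chips must be routed toward $v_t$ at each firing so that exactly the chips $1,2,\dots,x\prod t_j -1$ end up to the left of $c_{(t,x)}$ along the relevant path.
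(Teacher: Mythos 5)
Your proposal matches the paper's proof: both establish the recursion $a(t't_{i+1},x)=a(t',t_{i+1}x)$ by applying the layer-2 result (Proposition~\ref{prop:Layer2}) to the subtree rooted at the parent vertex, induct on the layer, and then obtain $b(t,x)$ from $a(R(t,x))$ via Corollary~\ref{cor:btx}. The existence direction you flag as delicate is exactly the step the paper handles (rather tersely) by treating the chips arriving at $v_{t'}$ as a fresh game on the subtree, so no new idea is needed beyond what you describe.
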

\begin{proof}    
    For layer 2, we proved this in Proposition~\ref{prop:Layer2}. Now we use induction on the layer number $i$. Suppose the lemma is true for layer $i$. Consider layer $i+1$. Let $t = t_1t_2 \dots t_i$ and $t' = t_1t_2 \dots t_i t_{i+1}$. Consider a subtree starting at $v_t$. In this subtree, by Proposition~\ref{prop:Layer2}, the smallest possible chip at the landing order $(t_{i+1},x)$ is the $(xt_{i+1})$th smallest chip at $v_t$. Thus, this chip is
    \[a(t,xt_{i+1}) = (xt_{i+1}) \prod_{j=1}^i t_j = x \prod_{j=1}^i t_j.\]

    To show that $b(t, x) = k^n +1-(k^{n-i}+1-x)\prod_{j=1}^i (k+1-t_j)$ we use Corollary~\ref{cor:btx} and the formula for $a(R(t, x))$.
\end{proof}

\begin{remark}
    The smallest chip at order $(t,x)$ is equal to the number of landing orders dominated by the order $(t,x)$.
\end{remark}

\begin{corollary}
    The smallest and the largest possible chips that land at vertex $v_t$ on the terminal layer $n+1$ are
    \[a(t, 1) = \prod_{j=1}^n t_j \quad \textrm{ and } \quad b(t, 1) = k^n+1-\prod_{j=1}^n(k+1-t_j),\]
    correspondingly.
\end{corollary}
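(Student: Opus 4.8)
The plan is to derive this corollary as a direct specialization of Lemma~\ref{lem:smalllargechip}, which gives the smallest and largest chip at an arbitrary landing order $(t,x)$ on any layer. The key observation is that a vertex $v_t$ on the terminal layer $n+1$ has a traversal string $t = t_1 t_2 \dots t_n$ of length exactly $i = n$, and in the stable configuration each such vertex holds exactly one chip (as established in Section~\ref{sec:unlabeled}). Since there is only one chip at $v_t$, its relative order must be $x = 1$, so the landing order of interest is precisely $(t, 1)$.

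First I would substitute $i = n$ and $x = 1$ into the formula $a(t,x) = x \prod_{j=1}^i t_j$ from Lemma~\ref{lem:smalllargechip}. This immediately yields
\[
a(t, 1) = 1 \cdot \prod_{j=1}^n t_j = \prod_{j=1}^n t_j,
\]
which is the claimed expression for the smallest chip. Next I would perform the analogous substitution into the formula for the largest chip, $b(t,x) = k^n + 1 - (k^{n-i} + 1 - x)\prod_{j=1}^i (k+1-t_j)$. Setting $i = n$ makes the factor $k^{n-i} = k^0 = 1$, and setting $x = 1$ turns the parenthesized coefficient into $1 + 1 - 1 = 1$, so the expression collapses to
\[
b(t, 1) = k^n + 1 - \prod_{j=1}^n (k+1-t_j),
\]
as desired.

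The proof is therefore essentially a clean plug-in, and there is no substantive obstacle: the real content was already carried out in the inductive proof of Lemma~\ref{lem:smalllargechip}. The only point requiring any care is the justification that $x = 1$ is the correct relative order to use, which I would make explicit by invoking the fact that the terminal layer $n+1$ receives $k^{n+1-(n+1)} = 1$ chip per vertex, so the single chip landing at $v_t$ automatically has relative order $1$. With that remark in place, both formulas follow at once by specialization.
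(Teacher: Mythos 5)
Your proof is correct and matches the paper's intent exactly: the corollary is stated as an immediate specialization of Lemma~\ref{lem:smalllargechip} with $i=n$ and $x=1$, and the paper gives no separate argument beyond that. Your additional remark justifying $x=1$ (each terminal-layer vertex receives exactly one chip) is a sensible bit of explicitness that the paper leaves implicit.
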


\begin{example}
\label{ex:endsIn112}
    For string $t = 111\ldots 112$, the smallest possible chip in the stable configuration is 2, and the largest possible chip is 
    \[b(t, 1)= k^n +1- (k+1-2)\prod_{j=1}^{n-1}(k+1-1) = k^{n-1}+1.\]
\end{example}

\begin{example}
    For $k=n=3$, the smallest chip that can end at vertex $v_{213}$ in the terminal configuration is $2\cdot 1 \cdot 3 = 6$, and the largest is $3^3 + 1 - 2 \cdot 3 \cdot 1 = 28 - 6 = 22$.
\end{example}

\begin{example}
    For $k=2$ and the vertex $v_t$ at the layer $n+1$, the smallest and the largest chips can be expressed in terms of the number of twos in the traversal string. Equivalently, this is the number of right children the traversal string passes through. Let us denote this number as $m$. The smallest chip at vertex $v_t$ is $2^m$, and the largest is $2^n + 1 - 2^{n-m}$.
\end{example}

\section{Landing Ranges}
\label{sec:ranges}

We know that the smallest child that can land at the landing order $(t,x)$ is $a(t,x)$, and the largest is $b(t,x)$. In this section, we show that any number in between $a(t,x)$ and $b(t,x)$ can land at $(t,x)$. We start with layer 2.

\begin{lemma}\label{lem:Ranges}
   Given a landing $(t,x)$ on layer 2 and chip $c$ such that $a(t,x) \leq c \leq b(t,x)$, there exists a firing strategy that places $c$ at $(t,x)$.
\end{lemma}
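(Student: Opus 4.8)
The plan is to prove this constructively, exactly as the layer-2 machinery was set up in Section~\ref{sec:layer2}. Since we are working on layer 2, a landing order is $(j,x)$ where $j \in \{1,\dots,k\}$ is a single digit and $x \in \{1,\dots,k^{n-1}\}$. By Proposition~\ref{prop:Layer2} we already know $a(j,x) = jx$ and $b(j,x) = k^n+1-(k^{n-1}+1-x)(k+1-j)$, and by Proposition~\ref{prop:LandingOrderLayer2} it suffices to exhibit, for each target chip $c$ with $a(j,x) \le c \le b(j,x)$, a distribution of all $k^n$ chips on layer 2 that respects the domination order and places chip $c$ at $(j,x)$. So the entire problem reduces to a combinatorial placement question: can we fill the $k \times k^{n-1}$ grid of landing orders with $1,2,\dots,k^n$, monotonically increasing along each row and each column, so that the cell $(j,x)$ holds the prescribed value $c$?

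\smallskip
\noindent\textbf{Step 1: Reformulate as a monotone grid-filling.} First I would set up the bijection between layer-2 landing orders and cells of an array indexed by column $j \in [k]$ and row-position $x \in [k^{n-1}]$, where the domination order $\preceq$ becomes the coordinatewise order on $(j,x)$. A configuration respects the domination order precisely when the array is strictly increasing in $j$ (for fixed $x$) and strictly increasing in $x$ (for fixed $j$). Because the array is filled bijectively with $1,\dots,k^n$, respecting the domination order forces the number in cell $(j,x)$ to be at least the number of cells it dominates plus one, namely $jx$, and at most $k^n+1$ minus the number of cells dominating it; these are exactly $a(j,x)$ and $b(j,x)$. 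Thus I recover the bounds and reduce the lemma to: given $c \in [a(j,x), b(j,x)]$, construct such a monotone array with value $c$ in cell $(j,x)$.

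\smallskip
\noindent\textbf{Step 2: Construct the array for a given $c$.} The idea is to place the chips smaller than $c$ into the region ``below and to the left'' as greedily as possible, and the chips larger than $c$ into the region ``above and to the right.'' Concretely, I would first fill the sub-rectangle of cells dominated by $(j,x)$ (there are $jx-1$ of them, and $c \ge jx$ guarantees enough small chips), then distribute any remaining small chips $< c$ into cells incomparable to $(j,x)$ in a monotone fashion, place $c$ itself at $(j,x)$, and finally fill the rest with the large chips. The feasibility of this greedy filling is exactly governed by a Gale--Ryser / Hall-type counting condition: a value $c$ can sit at $(j,x)$ in some monotone array iff the number of cells that must carry a value $< c$ (those dominated by $(j,x)$) is $\le c-1$ and the number that must carry a value $> c$ (those dominating $(j,x)$) is $\le k^n - c$, which are precisely the inequalities $a(j,x) \le c$ and $c \le b(j,x)$. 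I would verify that these two inequalities are not just necessary but sufficient by giving an explicit increasing labeling --- for instance, labeling cells in the reading order induced by any linear extension of $\preceq$ that is compatible with the placement of $c$.

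\smallskip
\noindent\textbf{Main obstacle.} The only delicate point is the sufficiency in Step 2: showing that once the two boundary inequalities hold, the ``middle'' chips (those strictly between the forced-small and forced-large regions, sitting on cells incomparable to $(j,x)$) can always be arranged so that every row and every column stays strictly increasing with $c$ fixed in its cell. I expect to handle this by exhibiting a canonical linear extension of the poset of cells in which $(j,x)$ occupies exactly position $c$; because the poset is a product of two chains (a grid), its linear extensions are flexible enough that the position of any single element $(j,x)$ ranges over precisely the full interval $[a(j,x), b(j,x)]$, and I can slide $(j,x)$ to any position in that interval by swapping it past incomparable elements. Once such a linear extension is fixed, reading off $1,2,\dots,k^n$ along it produces a monotone array with $c$ at $(j,x)$, and Proposition~\ref{prop:LandingOrderLayer2} converts it into the desired firing strategy.
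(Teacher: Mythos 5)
Your proposal is correct and follows essentially the same route as the paper: reduce via Proposition~\ref{prop:LandingOrderLayer2} to constructing a domination-respecting assignment of chips to the layer-2 landing orders, and construct it by filling the cells dominated by $(j,x)$ with the smallest chips, the cells dominating $(j,x)$ with the largest, placing $c$ at $(j,x)$, and distributing the remaining chips over the incomparable cells. The only difference is in the final verification: the paper checks monotonicity of the resulting array by an explicit case analysis, while you invoke the (correct, standard) fact that the set of positions an element can occupy in the linear extensions of a finite poset is exactly the interval determined by its down-set and up-set sizes.
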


\begin{proof}
    Consider a landing $(t,x)$ and the chip $c$ that is in the range from $a(t,x)$ to $b(t,x)$. Now, we describe the distribution of chips with respect to the domination order so that chip $c$ is at $(t,x)$.

    Consider the distribution of chips resulting from the following procedure: In step 1, we place the chips that are less than $a(t,x)$ to orders dominated by $(t,x)$ so that they respect the domination order. In step 2, we do the same for chips that are greater than $b(t,x)$. We place them at the landings that dominate $(t,x)$. We place $c$ at the vertex $v_t$ in step 3. At this point, all landings at vertex $v_t$ are matched with chips, and chip $c$ lands in order $x$. In step 4, we distribute the rest of the chips at the leftover landings in order from left to right.

    We now show that this distribution of chips respects the domination order. It is enough to show that for any vertex $v_{t'}$, the landings are in order, and for any second coordinate $x'$, the landings for different vertices are in order.

    We already saw that if $t' = t$, then the chips are distributed in order. Suppose $t' < t$. Then the landings at $t'$ that do not exceed $x$ are occupied by chips not exceeding $a(t,x)$ in order by step 1. The rest is occupied by chips between $a(t,x)$ and $b(t,x)$ in order by step 4. Thus, all of them are in order. By symmetry, a similar argument works for $t' > t$.

    Consider $x' = x$, then the chips at different children are in order by steps 1 and 2. Suppose $x' \leq x$, then for $t' < t$, the chips are in order by step 1, and all of them are less than $a(t,x)$. For $t' > t$, the chips are in order by step 4, and all of them are greater than $a(t,x)$. Thus, the chips are in order. A similar argument works for $x' > x$.

    Thus, the distribution of chips respects the poset order, and by Proposition~\ref{prop:LandingOrderLayer2}, the firing strategy exists that distributes the chips this way. The result follows.
\end{proof}

\begin{theorem}
\label{thm:ranges}
    For any landing $(t,x)$ on any layer, any chip in the range from $a(t,x)$ to $b(t,x)$ can appear there. In particular, the set of chips that can land at a particular vertex $v_t$ on the last layer forms a consecutive landing range
    \[a(t, 1) = \prod_{j=1}^n t_j \quad \textrm{ to } \quad b(t, 1) = k^n+1-\prod_{j=1}^n(k+1-t_j)\] inclusive. 
\end{theorem}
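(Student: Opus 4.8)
The plan is to reduce the general claim to the layer-2 result of Lemma~\ref{lem:Ranges} by induction on the layer number $i$, mirroring the inductive structure already used in Lemma~\ref{lem:smalllargechip}. The base case $i=1$ is exactly Lemma~\ref{lem:Ranges}. For the inductive step, suppose the statement holds for every landing on layer $i$, and let $(t',x')$ be a landing on layer $i+1$ with $t' = t_1 t_2 \dots t_i t_{i+1}$ and $t = t_1 \dots t_i$. The key observation is that a chip arrives at $(t',x')$ in two stages: first it must be routed to the vertex $v_t$ on layer $i$ in some relative order, and then the subtree rooted at $v_t$ must fire so as to place it at the order $(t_{i+1}, x')$ within that subtree.

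Concretely, I would first record the "local" layer-2 relationship inside the subtree at $v_t$: by the same computation as in Lemma~\ref{lem:smalllargechip}, a chip landing at $(t',x')$ is the chip that lands in relative order $(t_{i+1}, x')$ among the $k^{n+1-i}$ chips delivered to $v_t$, and by Proposition~\ref{prop:Layer2} applied to that subtree the possible local positions of this chip range from order $x' t_{i+1}$ up to $k^{n+1-i} + 1 - (k^{n-i}+1-x')(k+1-t_{i+1})$. The strategy is then to pick a target relative order $y$ at $v_t$ realizing the desired global chip $c$, and invoke the inductive hypothesis to route $c$ to landing $(t, y)$ on layer $i$, then apply Lemma~\ref{lem:Ranges} (local version) inside the subtree to move it from $(t,y)$ to $(t', x')$. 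To make this work I need the two ranges to be compatible: the global interval $[a(t',x'),\,b(t',x')]$ must be exactly the union, over admissible local orders $y$, of the global landing ranges $[a(t,y),\,b(t,y)]$ on layer $i$. I would verify this by a direct computation using the closed forms $a(t,y)=y\prod_{j=1}^i t_j$ and $b(t,y)=k^n+1-(k^{n-i}+1-y)\prod_{j=1}^i(k+1-t_j)$ from Lemma~\ref{lem:smalllargechip}: as $y$ ranges over consecutive integers, the left endpoints $a(t,y)$ step by $\prod t_j$ and the right endpoints $b(t,y)$ step by $\prod(k+1-t_j)$, and because $a(t,y+1) \le b(t,y)+1$ (the layer-$i$ ranges overlap or abut), their union is a single interval with the correct extreme endpoints $a(t',x')$ and $b(t',x')$.

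Alternatively, and perhaps more cleanly, I would avoid threading the induction through intermediate orders and instead give a direct domination-order construction generalizing the proof of Lemma~\ref{lem:Ranges}. Given $(t',x')$ on layer $i+1$ and a target chip $c$ with $a(t',x') \le c \le b(t',x')$, I would exhibit a single distribution of all $k^n$ chips on the terminal layer that respects the domination order and places $c$ at $(t', x')$: place the chips smaller than $a(t',x')$ on the landing orders strictly dominated by $(t',x')$ in increasing left-to-right order, place the chips larger than $b(t',x')$ on the landing orders strictly dominating $(t',x')$, put $c$ at $(t',x')$ itself, and fill the remaining landings left to right with the leftover chips. The counting identities $a(t',x') = $ (number of landing orders dominated by $(t',x')$) and the symmetric statement for $b$, noted in the remark after Lemma~\ref{lem:smalllargechip}, guarantee the cardinalities match so that this is a genuine bijective assignment. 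One then checks the domination order is respected by the same case analysis on $t'$ versus a competing string and on the second coordinate as in Lemma~\ref{lem:Ranges}. Finally, since any distribution on the terminal layer that respects the domination order is realizable by a firing strategy—this requires extending Proposition~\ref{prop:LandingOrderLayer2} from a single root-fire to a layer-by-layer recursive construction down the tree—the chip $c$ indeed lands at $v_{t'}$.

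The main obstacle is the realizability statement: Proposition~\ref{prop:LandingOrderLayer2} only guarantees that a domination-respecting distribution on layer~2 can be achieved by firing the root, whereas here I need that any domination-respecting distribution on the terminal layer is achievable by a full firing strategy on the whole tree. I expect to handle this by induction on layers, showing that a domination-respecting distribution on layer $n+1$ induces, upon aggregating the chips destined for each subtree, a domination-respecting distribution one layer up, so that Proposition~\ref{prop:LandingOrderLayer2} can be applied recursively at each vertex; the subtlety is confirming that the induced per-subtree orderings remain domination-respecting, which follows because restricting a domination-respecting global order to the orders sharing a fixed prefix preserves the partial order. The particular-case conclusion for $x=1$ on the last layer is then immediate by substituting the closed forms for $a(t,1)$ and $b(t,1)$.
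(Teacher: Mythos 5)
Your proposal is correct in substance, but both of your routes differ from the paper's in ways worth spelling out. The paper also inducts on the layer, but it peels off the \emph{first} digit: writing $t'=t_1t$, it applies the inductive hypothesis inside the subtree rooted at the child $v_{t_1}$ of the root, so the only overlap inequality it ever needs is $a(t_1,x+1)\le b(t_1,x)+1$ for \emph{length-one} strings at layer 2 --- that single computation is essentially the whole proof. Your first route peels off the \emph{last} digit instead, which forces the overlap $a(t,y+1)\le b(t,y)+1$ for an arbitrary prefix $t$ of length $i$; this is strictly more general than what the paper verifies, and you assert it rather than prove it. It is true, and the cleanest way to see it is combinatorial rather than by the direct computation you propose: by the remark after Lemma~\ref{lem:smalllargechip}, $a(t,y+1)$ counts the landing orders dominated by $(t,y+1)$ and $k^n-b(t,y)$ counts those strictly dominating $(t,y)$; these two sets meet only in $(t,y+1)$, so their sizes sum to at most $k^n+1$, which is exactly the desired overlap. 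Supplying that (or the longer reduction to strings of $1$'s and $k$'s) closes the one real gap in route one. Your second route --- placing $c$ at $(t',x')$, the chips below $a(t',x')$ on the $a(t',x')-1$ orders strictly dominated by $(t',x')$, the chips above $b(t',x')$ on the $k^n-b(t',x')$ orders strictly dominating it, filling the rest left to right, and then realizing the distribution --- is genuinely different from the paper and arguably cleaner, since it avoids the overlap inequality altogether; its one new ingredient is exactly the recursive extension of Proposition~\ref{prop:LandingOrderLayer2} that you flag, and your sketch of it is sound: for $j<j'$ the chip at $v_{js}$ is smaller than the chip at $v_{j's}$ for every suffix $s$, hence the $y$th order statistics of the two children's chip sets are in order (so the root can fire by Proposition~\ref{prop:LandingOrderLayer2}), and rank-relabelling within each subtree preserves domination, so the recursion descends. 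Either route, once the flagged inequality (route one) or realizability lemma (route two) is actually written out, proves the theorem; the ``in particular'' clause is then just the case $x=1$ with the closed forms of Lemma~\ref{lem:smalllargechip}.
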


\begin{proof}
    We already proved the base case for layer 2 in Lemma~\ref{lem:Ranges}. We proceed by induction.
    
    Assume that at layer $i+1$, the chips landing at landing order $(t,x)$ form the interval $[a(t,x), b(t,x)]$.

    We now prove that the same is true for layer $i+2$. Consider a landing order $(t',x')$ at layer $i+2$, where $t'=t_1t$, where $t$ is a traversal string of length $i$. We now consider a subtree starting at a child $v_{t_1}$ of the root. Relative to this subtree, our landing order is on layer $i+1$. Thus, by the induction hypothesis, our chip can only arrive from the interval of orders $(t',(a(t,x))$ to $(t',(b(t,x))$.
    
    What is left to prove is that chips that can land at this interval form an interval. Consider two consecutive landing orders $(t_1,x)$ and $(t_1,x+1)$ on layer 2. Assume that $n > 1$ and $x < k^{n-1}$, since otherwise $x+1$ would be greater than the number of chips in each vertex in layer 2 and $(t_1, x+1)$ would not be a valid landing order.  We know that 
    \[a(t_1,x) = xt_1 < a(t_1,x+1) = (x+1)t_1\]
    \[b(t_1,x) = k^n +1- (k^{n-1} + 1 - x) (k+1-t_1) < b(t_1,x+1) =k^n+1 - (k^{n-1} + 1 - x-1) (k+1-t_1).\]
    Now, we want to show that for $x \in \{1, 2, \dots, k^{n-1}-1\}$,
    \[a(t_1,x+1) \leq b(t_1,x) + 1,\]
    or equivalently,
     \begin{equation}\label{eq:SlightlyBoringInequality}
         (x+1)t_1 \leq k^n +1- (k^{n-1} + 1 - x) (k+1-t_1) + 1.
     \end{equation}
     First, we rearrange the right-hand side
     \begin{multline*}
        k^n +1- (k^{n-1} + 1 - x) (k+1-t_1) + 1 = k^n-k^n + k^{n-1} (t_1-1) -k-1+t_1 +x (k+1-t_1) + 2 = \\
        k^{n-1} (t_1-1) -k+ t_1 +x (k+1-t_1)+1,
     \end{multline*}
     implying that inequality (\ref{eq:SlightlyBoringInequality}) above is equivalent to
     \[(x+1)t_1 \leq k^{n-1} (t_1-1) -k+ t_1 +x (k+1-t_1)+1.\]
     Rearranging, we obtain the following
     \begin{equation}\label{eq:SimplerInequality}
         (2t_1-k-1) x+k-1 \leq (t_1-1) k^{n-1}.
     \end{equation}

     Suppose $t_1 \leq k-1$, then we have
     \[(2t_1-k-1) x+k-1 \leq (t_1-2) x+k-1 \leq (t_1-2)(k^{n-1} -1) + k-1).\]
    The right-hand side can be rearranged to
    \[(t_1 - 1)k^{n-1} - k^{n-1} + k  \leq  (t_1 - 1)k^{n-1}.\]
    Suppose $t_1 = k$, then we have 
     \[(2t_1-k-1) x+k-1 = (k-1) x+k-1 = (k-1)(x+1) \leq (k-1) k^{n-1} = (t_1-1) k^{n-1}.\]
     We showed that two consecutive landing orders at the same vertex overlap. It follows that the union of any number of consecutive landing orders at the same vertex form an interval.
\end{proof}

\begin{example}
    Suppose $k=2$. Consider a nontrivial traversal string $t$ on the last layer. That means $t$ has fewer than $n$ ones and fewer than $n$ twos. Then, the smallest chip that lands at $t$ is
    \[\prod_{j=1}^n t_j \leq 2^{n-1}.\]
    Similarly, we can estimate the largest chip
    \[2^n+1-\prod_{j=1}^n(2+1-t_j) > 2^n - 2^{n-1} = 2^{n-1}.\]
    It follows that any landing range for a nontrivial vertex on the last layer contains the chip $2^{n-1}$. Hence, any pair of such landing ranges overlaps, and any consecutive set of such landing ranges is a range.
\end{example}

As the smallest and the largest chips landing at a particular vertex depend only on the set of digits in the traversal string, we get the following corollary.

\begin{corollary}\label{cor:PermutingTheTraversalString}
     Two vertices that have the same set of digits in their traversal strings get the same set of chips.
\end{corollary}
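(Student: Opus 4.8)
The goal is to prove Corollary~\ref{cor:PermutingTheTraversalString}: two vertices whose traversal strings contain the same multiset of digits receive the same set of chips (i.e.\ the same landing range).

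\textbf{Plan.} The key observation is that by Theorem~\ref{thm:ranges}, the set of chips that can land at a vertex $v_t$ on the last layer is precisely the consecutive interval $[a(t,1), b(t,1)]$, so to show two vertices get the same set of chips it suffices to show they have the same smallest chip $a(t,1)$ and the same largest chip $b(t,1)$. From the Corollary following Lemma~\ref{lem:smalllargechip}, these values are
\[
a(t,1) = \prod_{j=1}^n t_j \quad\text{and}\quad b(t,1) = k^n + 1 - \prod_{j=1}^n (k+1-t_j).
\]
Both formulas are expressed entirely in terms of products over the digits $t_1, t_2, \dots, t_n$ of the traversal string.

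\textbf{Main step.} The heart of the argument is simply the commutativity of multiplication: a product of numbers does not depend on the order in which the factors are listed, only on the multiset of factors. Concretely, suppose $t$ and $t'$ are two traversal strings of length $n$ with the same multiset of digits, so that $t'$ is obtained from $t$ by permuting its digits. Then $\prod_{j=1}^n t_j = \prod_{j=1}^n t'_j$, which gives $a(t,1) = a(t',1)$. Likewise, since the map $d \mapsto k+1-d$ is applied digit-by-digit, the multiset $\{k+1-t_1, \dots, k+1-t_n\}$ equals the multiset $\{k+1-t'_1, \dots, k+1-t'_n\}$, so $\prod_{j=1}^n (k+1-t_j) = \prod_{j=1}^n (k+1-t'_j)$ and hence $b(t,1) = b(t',1)$.

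\textbf{Conclusion.} Since $v_t$ and $v_{t'}$ have identical landing ranges $[a(t,1), b(t,1)] = [a(t',1), b(t',1)]$, Theorem~\ref{thm:ranges} implies that exactly the same set of chips can land at each of them, completing the proof. I expect no real obstacle here: the statement is an immediate consequence of the explicit product formulas combined with the fact that the landing set is an interval determined by its endpoints. The only thing worth stating carefully is that ``same set of digits'' should be read as same multiset (with multiplicity), which is exactly what is needed for the products to agree.
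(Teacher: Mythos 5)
Your proof is correct and follows the same route as the paper: the paper derives this corollary immediately from the product formulas for $a(t,1)$ and $b(t,1)$ (which are invariant under permuting the digits) together with Theorem~\ref{thm:ranges}, which says the landing set is the full interval between them. Your only addition is spelling out the commutativity argument and the multiset reading of ``same set of digits,'' both of which the paper leaves implicit.
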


\begin{example}
Table~\ref{tab:ranges3-3} shows the landing ranges for $k=n=3$, depending on the traversal string up to a digit permutation.

\begin{table}[h!]\caption{Values for $a(t, 1)$ and $b(t, 1)$ for $k=n=3$ \label{tab:FirstThing}}
\centering
\begin{tabular}{|l|l|l|l|l|l|l|l|l|l|l|}
\hline
$t$              & 111 & 112 & 113 & 122 & 123 & 133 & 222 & 223 & 233 & 333 \\ \hline
$a(t,1)$ & 1 & 2 & 3 & 4 & 6 & 9 & 8 & 12 & 18 & 27 \\ \hline
$b(t,1)$ & 1 & 10 & 19 & 16 & 22 & 25 & 20 & 24 & 26 & 27 \\ \hline
Length & 1 & 9 & 17  & 13  & 17 & 17 & 13 & 13 & 9 & 1 \\ \hline
\end{tabular}
\label{tab:ranges3-3}
\end{table}
As we expected, symmetric string pairs (111 and 333, 112 and 233, 113 and 133, 122 and 223) have the same range length. The other two strings, 123 and 222, are self-symmetric and do not have pairs.
\end{example}

\section{Properties of the Landing Ranges}
\label{sec:propertiesofranges}

Here we look at the properties of landing ranges in more detail.

\subsection{Landing Ranges Lengths for Odd \texorpdfstring{$k$}{k}}

We consider the lengths of landing ranges corresponding to different vertices in the terminal configurations. Given $k^n$ labeled chips at the root and traversal string $t$ of length $n$ and landing range $[a(t, 1), b(t, 1)]$, we define the length of the landing range to be $b(t, 1)-a(t, 1) + 1$, i.e., the number of possible chips that can end up in vertex $v_t$ with traversal string $t$.

One might notice that in Table~\ref{tab:ranges3-3} all landing ranges' lengths are odd. This is not a coincidence, as the following proposition shows.

\begin{proposition}
    For odd $k$, the lengths of the landing ranges are always odd, or in other words, $b(t,1) - a(t,1) + 1$ is odd. 
\end{proposition}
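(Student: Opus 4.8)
The plan is to reduce everything to a single parity computation using the closed forms for $a(t,1)$ and $b(t,1)$ established in Lemma~\ref{lem:smalllargechip} (with $i=n$ and $x=1$). Writing the length of the landing range explicitly gives
\[
b(t,1) - a(t,1) + 1 = k^n + 2 - \prod_{j=1}^n (k+1-t_j) - \prod_{j=1}^n t_j,
\]
so I would show that this right-hand side is odd whenever $k$ is odd by tracking each summand modulo $2$.

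First I would observe that for odd $k$ the power $k^n$ is odd, hence $k^n + 2$ is odd. It then suffices to prove that the combined contribution of the two products,
\[
\prod_{j=1}^n t_j + \prod_{j=1}^n (k+1-t_j),
\]
is even. The key step is to note that since $k$ is odd, $k+1$ is even, so for every index $j$ the digits $t_j$ and $k+1-t_j$ sum to the even number $k+1$ and therefore have the same parity. Consequently the two products agree factor by factor modulo $2$, giving
\[
\prod_{j=1}^n t_j \equiv \prod_{j=1}^n (k+1-t_j) \pmod 2,
\]
so their sum is twice a quantity modulo $2$, i.e.\ even.

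Putting these together, the length equals an odd number minus an even number and is therefore odd, as claimed. The argument is essentially a single parity observation, so there is no real obstacle; the only point deserving care is verifying that the reflection $t_j \mapsto k+1-t_j$ preserves parity precisely because $k$ is odd. (For even $k$ the sum $k+1$ is odd and this parity matching fails, which is exactly why the proposition is stated only for odd $k$.)
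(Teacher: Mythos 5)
Your proof is correct and rests on the same key observation as the paper's: for odd $k$, each $t_j$ and $k+1-t_j$ have the same parity, so $\prod_j t_j$ and $\prod_j (k+1-t_j)$ agree modulo $2$. The paper phrases this as $a(t,1)$ and $b(t,1)$ having equal parity while you track the explicit length formula, but the argument is essentially identical.
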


\begin{proof}
    For any $t_j$, when $k$ is odd, the numbers $t_j$ and $k+1-t_j$ have the same parity. Thus, $a(t,1) = \prod_{j=1}^n t_j$ has the same parity as $\prod_{j=1}^n(k+1-t_j)$, which is the same as $b(t,1) = k^n+1 - \prod_{j=1}^n(k+1-t_j)$. It follows that the range length $b(t,1) - a(t,1) + 1$ is odd.
\end{proof}

\subsection{The Lengths of Landing Ranges}

Now, we are back to any $k \geq 2$. We find that the smallest landing range length is 1, corresponding to trivial vertices. They correspond to trivial traversal strings $111\dots 1$ and $kkk\dots k$, see Lemma~\ref{lem:TrivialChips}. We call the landing ranges corresponding to trivial vertices \textit{trivial landing ranges}.

Example~\ref{ex:endsIn112} gave us a landing range $[1,k^{n-1}]$ for the traversal string $111\dots 1112$. We will later show that this is the shortest nontrivial landing range. We continue with another example.

\begin{example}\label{ex:2and5}
Consider a directed binary tree starting with $2^5$ labeled chips. Table~\ref{tab:ranges2-5} shows start, end, and the length of landing ranges for different traversal strings up to a digit permutation.

\begin{table}[h!]\caption{Values for $a(t, 1)$ and $b(t, 1)$ for $k=2, n=5$ \label{tab:SecondThing}}
\centering
\begin{tabular}{|l|l|l|l|l|l|l|l|l|}
\hline
$t$       & 11111 & 11112 & 11122 & 11222 & 12222 & 22222 \\ \hline
$a(t,1)$ & 1 & 2 & 4 & 8 & 16 & 32 \\ \hline
$b(t,1)$ & 1 & 17 & 25& 29& 31& 32\\ \hline
Length & 1 & 16 & 22& 22& 16 & 1 \\ \hline
\end{tabular}
\label{tab:ranges2-5}
\end{table}
\end{example}

The following statement is true due to reflection symmetry.
\begin{proposition}\label{prop:Reflection}
    The lengths of the landing ranges for strings $t$ and $R(t)$ are the same.
\end{proposition}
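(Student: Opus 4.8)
The plan is to exploit the reflection symmetry $R$ directly, using the closed-form expressions for $a(t,1)$ and $b(t,1)$ from Lemma~\ref{lem:smalllargechip} (specialized to $x=1$ and $i=n$). The key observation is that reflecting a traversal string $t = t_1 t_2 \dots t_n$ produces $R(t) = (k+1-t_1)(k+1-t_2)\dots(k+1-t_n)$, so the digit multiset of $R(t)$ is exactly $\{k+1-t_j : j \in [n]\}$. This means that the two products appearing in the length formula simply swap roles under reflection.

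Concretely, I would write $L(t) = b(t,1) - a(t,1) + 1$ for the length of the landing range at $v_t$. Using the formulas
\[
a(t,1) = \prod_{j=1}^n t_j \quad\text{and}\quad b(t,1) = k^n + 1 - \prod_{j=1}^n (k+1-t_j),
\]
I would substitute $R(t)$ in place of $t$ and compute $a(R(t),1)$ and $b(R(t),1)$. Since the $j$th digit of $R(t)$ is $k+1-t_j$, we get $a(R(t),1) = \prod_{j=1}^n (k+1-t_j)$ and $b(R(t),1) = k^n + 1 - \prod_{j=1}^n (k+1-(k+1-t_j)) = k^n + 1 - \prod_{j=1}^n t_j$. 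Therefore
\[
L(R(t)) = b(R(t),1) - a(R(t),1) + 1 = \left(k^n + 1 - \prod_{j=1}^n t_j\right) - \prod_{j=1}^n (k+1-t_j) + 1,
\]
and this is visibly identical to $L(t) = \left(k^n + 1 - \prod_{j=1}^n (k+1-t_j)\right) - \prod_{j=1}^n t_j + 1$, since both equal $k^n + 2 - \prod_{j=1}^n t_j - \prod_{j=1}^n (k+1-t_j)$.

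Alternatively, and perhaps more cleanly, I would invoke Corollary~\ref{cor:btx} directly: it gives $b(t,1) = k^{n-n}+1 - a(R(t,1)) = 1 + \ldots$ — but since the cleaner route is the explicit substitution above, I would present that. There is essentially no obstacle here: the statement is an immediate algebraic consequence of the symmetry of the length expression under interchanging the two products, which is exactly what reflection effects. The only thing to be careful about is bookkeeping with the involution $R(R(t)) = t$ (i.e.\ that $k+1-(k+1-t_j) = t_j$), which guarantees the products genuinely swap rather than producing a third quantity. I would state the one-line computation and conclude $L(t) = L(R(t))$, which is the claim.
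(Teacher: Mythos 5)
Your proof is correct and is essentially the paper's own argument made explicit: the paper simply asserts the proposition ``due to reflection symmetry,'' and your computation that $b(t,1)-a(t,1)+1 = k^n+2-\prod_{j=1}^n t_j - \prod_{j=1}^n(k+1-t_j)$ is invariant under $t\mapsto R(t)$ is precisely that symmetry written out (the same symmetric expression $f(t)$ reappears in the paper's subsequent theorem on extremal range lengths). Your instinct to avoid Corollary~\ref{cor:btx} here was also sound, since as printed it reads $k^{n-i}+1-a(R(t,x))$ where the surrounding results make clear $k^{n}+1-a(R(t,x))$ is intended.
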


We noticed that the lengths of landing ranges are relatively large. Here we calculate the smallest and largest possible lengths of an interval $[a(t, 1),b(t, 1)]$ for a nontrivial traversal string $t$ of length $n$.

\begin{theorem}
    The shortest length of a nontrivial landing range is 
    \[k^{n-1},\]
    and it can be achieved for $t$ that consists of all 1's and one 2, or if $t$ consists of all $k$'s and one $k-1$. 
    The longest possible landing range is
    \[k^n+2-k^{\lfloor n/2 \rfloor} -k^{\lceil n/2 \rceil},\]
    and it can be achieved if $t$ consists of $\lfloor n/2 \rfloor$ ones and $\lceil n/2 \rceil$ occurrences of $k$, or vice versa.
\end{theorem}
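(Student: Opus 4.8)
The plan is to treat the landing-range length as an explicit function of the traversal string and optimize it. By Theorem~\ref{thm:ranges} and its corollary, the length of the landing range at $v_t$ is
\[
L(t) = b(t,1) - a(t,1) + 1 = k^n + 2 - \prod_{j=1}^n (k+1-t_j) - \prod_{j=1}^n t_j.
\]
Since Corollary~\ref{cor:PermutingTheTraversalString} tells us the range depends only on the multiset of digits in $t$, I would reparametrize: write $m_s$ for the number of times digit $s \in \{1,\dots,k\}$ appears, so $\sum_s m_s = n$. Then $L(t)$ depends only on the two products $P = \prod_j t_j$ and $Q = \prod_j (k+1-t_j)$, and maximizing $L$ is equivalent to minimizing $P + Q$, while minimizing $L$ (over nontrivial strings) is equivalent to maximizing $P + Q$ subject to $t$ not being trivial.

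First I would handle the \emph{shortest} nontrivial range, i.e.\ maximizing $P+Q$. The trivial strings give $P+Q = 1 + k^n$ (the all-1's string has $P=1$, $Q=k^n$, and symmetrically for all-$k$'s), which would give length $1$; the claim is that excluding these, the maximum of $P+Q$ is $k^{n-1} + k = (k^n + 2) - k^{n-1}$, attained at the string $11\dots12$. The natural approach is an exchange argument: show that if a nontrivial string has any digit strictly between its extremes, or more than one non-extreme digit, we can increase $P+Q$ by pushing a digit toward $1$ or $k$. Concretely, I expect the key inequality to be that among nontrivial strings, $P+Q$ is maximized by making the string ``as trivial as possible,'' namely $n-1$ copies of $1$ and a single $2$ (or its reflection by Proposition~\ref{prop:Reflection}), giving $P = 2$ and $Q = k^{n-1}$, hence $L = k^{n-1}$. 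I would verify $2 + k^{n-1} \geq P + Q$ for every other nontrivial multiset.

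Next I would handle the \emph{longest} range, i.e.\ minimizing $P+Q = \prod_j t_j + \prod_j (k+1-t_j)$. Writing $u_j = t_j$ and noting $\prod u_j + \prod(k+1-u_j)$, the intuition is that the two products are ``balanced'' when the digits are split evenly between $1$ and $k$: taking $\lfloor n/2 \rfloor$ ones and $\lceil n/2 \rceil$ copies of $k$ gives $P = k^{\lceil n/2\rceil}$ and $Q = k^{\lfloor n/2\rfloor}$ (with the roles swapping under reflection), so $P+Q = k^{\lfloor n/2\rfloor} + k^{\lceil n/2\rceil}$ and $L = k^n + 2 - k^{\lfloor n/2\rfloor} - k^{\lceil n/2\rceil}$, matching the claim. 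To prove this is the minimum I would first argue that each digit should be pushed to an extreme value $1$ or $k$: for fixed other digits, $t_j \mapsto \prod_j t_j + \prod_j(k+1-t_j)$ is a function of the form $\alpha t_j + \beta(k+1-t_j)$ with $\alpha,\beta > 0$, which is linear and hence minimized at an endpoint of $[1,k]$. This reduces the problem to strings using only the digits $1$ and $k$; if $p$ of them equal $k$ then $P+Q = k^p + k^{n-p}$, and I would finish by a one-variable convexity argument showing $k^p + k^{n-p}$ is minimized at $p = \lfloor n/2 \rfloor$.

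\textbf{The main obstacle} I anticipate is the shortest-range (maximization of $P+Q$) direction, rather than the longest. The ``push to extremes'' linearity argument that cleanly solves the minimization \emph{fails} for the maximization, because a linear function on $[1,k]$ is maximized at an endpoint too — so pushing digits to extremes tends to \emph{increase} $P+Q$, which is exactly what we want to avoid driving all the way to the trivial strings. The delicate point is that the genuine maximizer among nontrivial strings sits just one step away from a trivial string, so the argument must carefully track the nontriviality constraint and show that the best nontrivial configuration is a single ``defect'' ($11\dots12$) rather than the forbidden all-equal string. I would handle this by comparing $P+Q$ for a candidate nontrivial string directly against $2 + k^{n-1}$, likely by first using the extreme-point reduction to restrict to $\{1,k\}$-strings (where nontriviality means at least one of each), showing $k^p + k^{n-p} \le k + k^{n-1}$ for $1 \le p \le n-1$, and then separately arguing that any string with a strictly interior digit cannot beat this bound.
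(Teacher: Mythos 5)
Your treatment of the \emph{longest} landing range is correct and is essentially the paper's own argument: the paper likewise writes $f(t)=\prod_j t_j+\prod_j(k+1-t_j)$, observes that $f$ is linear in each digit $t_i$ (of the form $y_it_i+x_i(k+1-t_i)$ with $x_i,y_i>0$), concludes that the extremes occur on $\{1,k\}$-strings, and minimizes $k^p+k^{n-p}$ at $p=\lfloor n/2\rfloor$.

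The \emph{shortest}-range half, however, contains a genuine error, located exactly where you predicted the difficulty would be. Your proposed reduction (``first restrict to $\{1,k\}$-strings, then show $k^p+k^{n-p}\le k+k^{n-1}$'') yields the wrong answer for $k>2$: the maximizer of $P+Q$ over nontrivial strings is $t=11\cdots12$, which is \emph{not} a $\{1,k\}$-string, and it strictly beats every nontrivial $\{1,k\}$-string. Concretely, for $t=11\cdots12$ one has $P=2$ and $Q=(k-1)k^{n-1}$ --- not $k^{n-1}$ as you wrote; that holds only when $k=2$ --- so $P+Q=k^n-k^{n-1}+2$, whereas the best nontrivial $\{1,k\}$-string ($11\cdots1k$) gives only $k+k^{n-1}$, and the gap is $(k-2)(k^{n-1}-1)>0$ for $k>2$, $n\ge 2$. (Your identity $k^{n-1}+k=(k^n+2)-k^{n-1}$ is likewise false for $k>2$.) The paper's table for $k=n=3$ illustrates this: the string $113$ has range length $17$ while $112$ has length $9=k^{n-1}$. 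So the last step of your plan, ``argue that any string with a strictly interior digit cannot beat this bound,'' asks you to prove the opposite of what is true. The repair is to keep the coordinate-wise linearity but carry the nontriviality constraint through it: since $f$ is linear in each digit, from any nontrivial string one can weakly increase $f$ by moving digits toward the unconstrained maximizer (a trivial corner) while remaining nontrivial, so the constrained maximum is attained one step away from a trivial string, i.e.\ at $11\cdots12$ or its reflection, giving $P+Q=k^n-k^{n-1}+2$ and hence shortest nontrivial length $k^{n-1}$. This is, in essence, what the paper does.
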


\begin{proof}
    The landing range for a vertex $v_t$ is $b(t, 1)-a(t, 1) + 1 = k^n+ 2 - \prod_j (k+1-t_j) - \prod_j t_j$. We consider the value
    \[f(t) = \prod_j (k+1-t_j) + \prod_j t_j.\]
    Denote $\frac{\prod_j (k+1-t_j)}{k+1-t_i}$ as $x_i$ and $\frac{\prod_j t_j}{t_i}$ as $y_i$. Then
    \[f(t) = x_i(k+1-t_i) +y_it_i.\]
    We evaluate how $f(t)$ changes if we change $t_i$ by 1. Suppose $t' = t_1t_2\dots t_{i-1}(t_i+1)t_{i+1}\dots t_n$. Then
    \[f(t') - f(t) = y_i - x_i.\]
    If $y_i - x_i > 0$, then the function $f$ increases/decreases when $t_i$ increases/decreases, respectively. If $y_i-x_i < 0$, the trend reverses. If $y_i - x_i = 0$, the function $f$ does not change. It follows that $f(t)$ reaches extreme values when string $t$ consists of 1's and $k$'s. Suppose $t$ has $j$ occurrences of $k$ and $n-j$ occurrences of ones. Then
    \[f(t) = k^{n-j} + k^j.\]
    
    The minimum is reached when $j = \lfloor n/2 \rfloor$.

    The maximum is reached when all the digits in $t$ are either $k$ or 1. However, in this case, we get the trivial landing range. To get the smallest nontrivial landing range, we should be one step away from all $k$'s or all ones. That means the smallest nontrivial landing range is achieved for $t$ that consists of all 1s and one 2. Alternatively, it can be achieved if $t$ consists of all $k$'s and one $k-1$. 
\end{proof}

\begin{example}
    In Example~\ref{ex:2and5}, we see that for $k=2$ and $n=5$, the shortest nontrivial landing range is 16, which is $k^{n-1} = 2^4$, and the longest landing range is 22, which is $k^n+2-k^{\lfloor n/2 \rfloor} -k^{\lceil n/2 \rceil} = 2^5 + 2 - 2^2 - 2^3$.
\end{example}

\section{Where do Chips Land}\label{sec:WheredoChipsLand}

\subsection{General Observation and Notation}

Given that we know for every vertex on the final layer what chips can go there, we can calculate for each $c$ where it can land. The following is the immediate corollary of Theorem~\ref{thm:ranges}.

\begin{corollary}
    Chips $c$ lands at $v_t$ on layer $n+1$ if and only if \[a(t, 1) \leq c \leq b(t, 1).\]
\end{corollary}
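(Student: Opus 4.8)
The plan is to read off the corollary as a direct restatement of Theorem~\ref{thm:ranges} specialized to the last layer.

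\bigskip

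The corollary asserts a biconditional: chip $c$ lands at vertex $v_t$ on layer $n+1$ for some firing strategy if and only if $a(t,1) \le c \le b(t,1)$. I would prove the two directions separately, both of which are packaged inside the last sentence of Theorem~\ref{thm:ranges}. First, for the forward direction, suppose chip $c$ lands at $v_t$ under some firing strategy. On the terminal layer $n+1$, each vertex $v_t$ holds exactly one chip (as established in Section~\ref{sec:unlabeled}), so the landing order is forced to be $(t,1)$. By the definition of $a(t,1)$ and $b(t,1)$ as the smallest and largest chips that can land at $(t,1)$ (Lemma~\ref{lem:smalllargechip}), any chip that actually lands there must satisfy $a(t,1) \le c \le b(t,1)$. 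This direction is essentially just the definition of $a$ and $b$ as extremal bounds.

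\bigskip

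For the reverse direction, suppose $a(t,1) \le c \le b(t,1)$. I would invoke Theorem~\ref{thm:ranges} directly, which states that for any landing order on any layer, every chip in the range from $a(t,x)$ to $b(t,x)$ can appear there; applying this to the landing order $(t,1)$ on the final layer shows there exists a firing strategy placing $c$ at $v_t$. Then I would substitute the explicit formulas $a(t,1) = \prod_{j=1}^n t_j$ and $b(t,1) = k^n + 1 - \prod_{j=1}^n (k+1-t_j)$ from the theorem to match the corollary's stated interval endpoints, although these closed forms are not strictly needed for the logical equivalence.

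\bigskip

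I do not anticipate any real obstacle here, since the corollary is labeled an immediate consequence: the entire content has already been proved in Theorem~\ref{thm:ranges}, and the only additional observation is that on the last layer the relative order $x$ must equal $1$ because each terminal vertex receives exactly one chip. The mildly delicate point worth stating explicitly is precisely this reduction from the general landing order $(t,x)$ to the specific order $(t,1)$, justifying why the vertex-level statement follows from the landing-order-level statement. Everything else is a direct citation.
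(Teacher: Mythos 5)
Your proposal is correct and matches the paper, which states this result without proof as an ``immediate corollary'' of Theorem~\ref{thm:ranges}: the forward direction is just the extremality of $a(t,1)$ and $b(t,1)$, and the reverse direction is the theorem applied to the landing order $(t,1)$. Your added remark that $x=1$ is forced on the terminal layer is a correct and harmless elaboration of what the paper leaves implicit.
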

From this follows a known fact, stated in Lemma~\ref{lem:TrivialChips}, about the vertices the trivial chips land at.

\begin{example}
    Consider the second-smallest chip. It can only traverse to vertices $v_t$, such that $t$ has $n-1$ ones and 1 two. When we number the vertices, these vertices have numbers 2, $k+1$, $k^2+1$, $\dots$, $k^{n-1} + 1$. If we consider the third smallest chip and $k = 2$ and $n > 1$, we can see that it lands at the same set of vertices.
\end{example}

We can make two observations from this example. 

First, we see that the vertices form groups with the same landing pattern. We discuss these groupings in Section~\ref{sec:groupings}. 

Second, we see that the vertices that an individual chip can land at do not have to be consecutive. However, we are interested in the leftmost and rightmost vertices a chip can land on and the difference between them. We discuss this in Section~\ref{sec:spreads}.

\subsection{Groupings}
\label{sec:groupings}

When one looks at the data, one sees that chips are in consecutive groups that correspond to the same set of final positions. 

Suppose $t$ is the traversal string leading to the terminal layer $n+1$. The set of smallest (or largest) numbers that are possible at such vertices is $a(t,1)$ (or $b(t,1)$). Consider the union $U$ of sets $a(t,1)$ and $b(t,1)$. These numbers divide the chips into groups.

\begin{proposition}
    Consider two consecutive numbers $u_1$ and $u_2$ in the union $U$. Each chip $c$ such that $u_1 \leq c < u_2$ lands on the same subset of vertices $v_t$, for which
    \[a(t, 1) \leq u_1 \quad \textrm{ and } \quad b(t, 1) \geq u_2.\]
\end{proposition}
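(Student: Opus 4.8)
The plan is to prove this by establishing two directions: first, that each chip $c$ with $u_1 \leq c < u_2$ lands on \emph{exactly} the set of vertices $v_t$ satisfying $a(t,1) \leq u_1$ and $b(t,1) \geq u_2$, and second, that the choice of $c$ in this range does not matter. By the corollary immediately preceding this proposition, chip $c$ lands at $v_t$ if and only if $a(t,1) \leq c \leq b(t,1)$. So the entire content reduces to showing that, for $c$ in the half-open interval $[u_1, u_2)$, the condition $a(t,1) \leq c \leq b(t,1)$ is equivalent to the condition $a(t,1) \leq u_1$ and $b(t,1) \geq u_2$.

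First I would prove the forward direction. Suppose $a(t,1) \leq c \leq b(t,1)$ for some $c$ with $u_1 \leq c < u_2$. Since $a(t,1)$ is an element of $U$ and $a(t,1) \leq c < u_2$, and $u_1, u_2$ are \emph{consecutive} in $U$, there is no element of $U$ strictly between $u_1$ and $u_2$; because $a(t,1) \in U$ satisfies $a(t,1) \leq c < u_2$, it cannot lie strictly above $u_1$, so $a(t,1) \leq u_1$. Symmetrically, $b(t,1) \in U$ satisfies $b(t,1) \geq c \geq u_1$, and consecutiveness forces $b(t,1) \geq u_2$. This is the crux: the key step is to exploit that $a(t,1)$ and $b(t,1)$ themselves belong to $U$, so any such endpoint squeezed into the gap $[u_1, u_2)$ must actually coincide with or lie beyond the appropriate boundary. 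For the converse direction, suppose $a(t,1) \leq u_1$ and $b(t,1) \geq u_2$. Then for any $c$ with $u_1 \leq c < u_2$ we immediately have $a(t,1) \leq u_1 \leq c$ and $c < u_2 \leq b(t,1)$, so $a(t,1) \leq c \leq b(t,1)$, and the cited corollary gives that $c$ lands at $v_t$.

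Combining the two directions, for every $c \in [u_1, u_2)$ the set of landing vertices is precisely $\{v_t : a(t,1) \leq u_1 \text{ and } b(t,1) \geq u_2\}$, which is manifestly independent of the particular $c$. This yields both claims of the proposition: that all such $c$ share a common landing set, and the explicit description of that set.

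The main obstacle I anticipate is the careful handling of the boundary inequalities and the consecutiveness argument, particularly making rigorous the step that an endpoint $a(t,1)$ or $b(t,1)$ lying in the gap between two consecutive members of $U$ must collapse onto a boundary. One subtlety worth addressing is whether $b(t,1) \geq u_2$ should be strict or not and whether $u_2$ can itself equal some $a(t',1)$ for a vertex that gains chip $u_2$ but loses chip $u_2 - 1$; the half-open convention $u_1 \leq c < u_2$ is precisely what makes the grouping partition the chips cleanly, so I would double-check that the largest chip $u_2$ is handled by the next group rather than this one. Everything else is a direct substitution into the preceding corollary, so no lengthy computation is required.
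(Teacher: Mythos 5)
Your overall strategy is the same as the paper's: reduce everything to the characterization ``$c$ lands at $v_t$ if and only if $a(t,1)\le c\le b(t,1)$'' and then exploit that $a(t,1)$ and $b(t,1)$ themselves lie in $U$ together with the consecutiveness of $u_1$ and $u_2$. Your treatment of the left endpoint is correct and in fact more explicit than the paper's: from $a(t,1)\le c<u_2$ and $a(t,1)\in U$ you rightly conclude $a(t,1)\le u_1$. The gap is in the step you call ``symmetric.'' From $b(t,1)\ge c\ge u_1$ and $b(t,1)\in U$, consecutiveness only yields $b(t,1)=u_1$ \emph{or} $b(t,1)\ge u_2$; the half-open interval $[u_1,u_2)$ is not symmetric in its two endpoints, so the case $b(t,1)=u_1$ (which forces $c=u_1$) is not excluded. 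You sensed a subtlety but placed it at the wrong endpoint: the danger is not $u_2=a(t',1)$, which the half-open convention does handle, but $u_1=b(t,1)$.

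This is not a repairable omission, because the exceptional case actually occurs and the proposition fails there. Take $k=n=3$ and $t=112$, so $[a(t,1),b(t,1)]=[2,10]$. The numbers $10$ and $12$ are consecutive in $U$, since $11$ is neither a product of three digits from $\{1,2,3\}$ nor $28$ minus such a product. Chip $10\in[10,12)$ can land at $v_{112}$, but chip $11\in[10,12)$ cannot; moreover $b(112,1)=10<12=u_2$, so the claimed description of the common landing set fails for chip $10$ as well. The paper's own proof simply asserts $b(t,1)\ge u_2>c$ without justification and therefore contains the same error (indeed, the explicit $k=2$ groups given later in the paper, such as $\{16,17\}$ and $\{18,\dots,25\}$ for $n=5$, are not of the form $[u_1,u_2)$ with $u_i\in U$). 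The statement becomes correct if the breakpoints are taken to be the values $a(t,1)$ together with $b(t,1)+1$ rather than $b(t,1)$. So your proposal faithfully reproduces the paper's argument, including its flaw; to make it sound you would need to amend either the set $U$ or the statement itself.
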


\begin{proof}
Consider chip $c \in [u_1, u_2)$. Suppose it can land at a vertex $v_t$ in the stable configuration. It follows by Theorem~\ref{thm:ranges} that $c \in [a(t,1), b(t, 1)]$. We also have $a(t, 1) \leq u_1 \leq c$ and $b(t, 1) \geq u_2 > c$. Consider another chip $c' \in [u_1, u_2)$. It follows that
\[a(t, 1) \leq u_1 \leq c \quad \textrm{ and } \quad b(t, 1) \geq u_2 > c,\]
implying that $c'$ can land on the same vertex $v_t$. The statement follows.
\end{proof}

\begin{example}
    For $k=3$ and $n=3$, the numbers of the form $2^i3^j$, where $i+j \leq 3$ are:
\[1,\ 2,\ 3,\ 4,\ 6,\ 8,\ 9,\ 12,\ 18,\ 27.\]
They are the values of $a(t,1)$ for all the vertices on the last layer.

We also subtract these numbers from 28 to obtain the values of $b(t,1)$ for all vertices on the last layer.
\[1,\ 10,\ 16,\ 19,\ 20,\ 22,\ 24,\ 25,\ 26,\ 27.\]
The union of $a(t,1)$ and $b(t,1)$ is
\[\ 1,\ 2,\ 3,\ 4,\ 6,\ 8, \ 9, \ 10,\ 12,\ 16,\ 18,\ 19,\ 20,\ 22,\ 24,\ 25,\ 26, \ 27.\]
The actual data shows that this is exactly where the groupings start.
\end{example}

For $k=2$, we can describe the groups explicitly. 

\begin{proposition}
    Suppose $k=2$. Consider two chips, $c_1$ and $c_2$. Then, the set of vertices chip $c_1$ can land in is the same as that of chip $c_2$ if and only if
    \begin{itemize}
        \item The binary representations of $c_1$ and $c_2$ have the same number of bits that is less than $n$ or
        \item The binary representations of $2^n +1 - c_1$ and $2^n +1- c_2$ have the same number of bits that is less than $n$, or
        \item Chips $c_1$ and $c_2$ are $2^{n-1}$ and $2^{n-1}+1$ in some order.
        \end{itemize}
\end{proposition}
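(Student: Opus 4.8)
The plan is to reduce the claim to a statement about binary digit-counts and then match that statement to the three listed cases. First I specialize the landing data to $k=2$. For a traversal string $t$ of length $n$ with exactly $m$ twos, the formulas of Theorem~\ref{thm:ranges} give $a(t,1)=2^{m}$ and $b(t,1)=2^{n}+1-2^{n-m}$, and by Corollary~\ref{cor:PermutingTheTraversalString} the set of vertices a chip can reach depends only on the \emph{set} of admissible values of $m$. So for each chip $c$ I set
\[
M(c)=\{\,m\in\{0,1,\dots,n\}:\ 2^{m}\le c\le 2^{n}+1-2^{n-m}\,\},
\]
and, since for each fixed $m$ the family of vertices with $m$ twos is nonempty and these families are disjoint, the landing vertex-set of $c_1$ equals that of $c_2$ exactly when $M(c_1)=M(c_2)$. (I take $c_1\neq c_2$; equal chips are vacuous.)

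Next I show $M(c)$ is an integer interval and read off its endpoints. The constraint $2^{m}\le c$ is equivalent to $m\le\floor{\log_2 c}$, and $2^{n-m}\le 2^{n}+1-c$ is equivalent to $m\ge n-\floor{\log_2(2^{n}+1-c)}$. Both bounds lie in $\{0,\dots,n\}$ and $M(c)\neq\varnothing$ (every chip lands somewhere), so $M(c)=[\ell(c),u(c)]$ with $u(c)=\floor{\log_2 c}$ and $\ell(c)=n-\floor{\log_2(2^{n}+1-c)}$. Because $\floor{\log_2 x}+1$ is precisely the number of binary digits of $x$, the endpoint $u(c)$ records the bit-count of $c$ and $\ell(c)$ records the bit-count of $2^{n}+1-c$. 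Hence $M(c_1)=M(c_2)$ if and only if $c_1,c_2$ have the same number of bits \emph{and} $2^{n}+1-c_1,\,2^{n}+1-c_2$ have the same number of bits.

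Finally I translate this conjunction of two digit-count equalities into the stated disjunction via a partition of the chips by comparison with the centre $2^{n-1}$: the left region $c\le 2^{n-1}-1$, where $c$ has fewer than $n$ bits; the right region $c\ge 2^{n-1}+2$, where $2^{n}+1-c$ has fewer than $n$ bits; and the central pair $\{2^{n-1},2^{n-1}+1\}$, which I check are exactly the chips for which both $c$ and $2^{n}+1-c$ have precisely $n$ bits. The main work---and the main obstacle---is to verify that inside the left region the lower endpoint is forced (one computes $\ell(c)=1$ for $c\ge 2$, with $c=1$ an isolated chip), so that there two chips share $M$ iff they share their bit-count, which is exactly the first bullet; the right region is the reflection of this and yields the second bullet; and the central pair both have $M=[1,n-1]$, giving the third. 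I must also confirm the three regions produce disjoint intervals $M(c)$---left-region chips have $u\le n-2$ and $\ell\le1$, right-region chips have $\ell\ge2$, and the central pair has $\ell=1,u=n-1$---so that no two chips from different regions can share an $M$; this exhaustiveness and disjointness is precisely what makes the three-way disjunction an honest ``iff.'' For the converse, given $M(c_1)=M(c_2)$ I split on whether $c$ has fewer than $n$ bits, $2^{n}+1-c$ has fewer than $n$ bits, or neither, which places the pair in exactly one bullet.
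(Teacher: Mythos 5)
Your proof is correct and follows essentially the same route as the paper: both rest on the $k=2$ landing-range formulas $a(t,1)=2^m$, $b(t,1)=2^n+1-2^{n-m}$ together with Theorem~\ref{thm:ranges}, and both split into chips below $2^{n-1}$, chips above $2^{n-1}+1$, and the central pair $\{2^{n-1},2^{n-1}+1\}$. Your packaging of the landing set as an interval $M(c)=[\ell(c),u(c)]$ of twos-counts, so that equality of landing sets becomes simultaneous equality of the bit-counts of $c$ and of $2^n+1-c$, is a cleaner intermediate formulation than the paper's direct case analysis (which distinguishes the left and right regions via the second and second-to-last vertices), but the underlying computations are identical.
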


\begin{proof}
If the number of bits in the binary representation of $c$ or $2^n +1 - c$ is 1, then the chip is trivial, and by Lemma~\ref{lem:TrivialChips}, the proposition is satisfied.

Now, assume that neither $c_1$ nor $ c_2$ is a trivial chip. We know that the groupings start at the chips in the union of $a(t,1) = 2^m$, where $m$ is the number of 2s in the traversal string $t$ and $b(t,1) = 2^n - 2^{n-m} +1$, where $m < n$. We know that
    \[a(t,1) \leq 2^{n-1} < b(t',1)\]
    for nontrivial traversal strings $t$ and $t'$.

    Consider chip $c$ and a positive integer $m$ such that
    \[2^m \leq c < 2^{m+1},\]
    where we assume that $m < n-1$. In other words, $m+1 < n$ is the number of digits in the binary representation of $c$. We claim that $c$ lands on vertices with traversal strings that are nontrivial and have $m$ or fewer twos. Suppose $t$ has $m$ or fewer twos, then
    \[a(t,1) \leq 2^m \leq c < 2^{n-1} < b(t, 1).\]
    That means, by Theorem~\ref{thm:ranges}, that $c$ is in the landing range for the vertex $v_t$. If $t$ has more than $m$ twos, then
    \[ c < a(t,1),\]
    which precludes $c$ from landing on $v_t$. It follows that all $c < 2^{n-1}$ with the same number of binary digits less than $n$ land on the same set of vertices. If two chips $c_1, c_2$ less than $2^{n-1}$ have a different number of binary digits, then the set of possible vertices $c_1$ can land on is different from that of $c_2$.

    By symmetry, chips $c > 2^{n-1} + 1$ such that $2^n+1-c$ have the same number of bits less than $n$ in their binary representations have the same landing pattern; and if the number of bits is different, the patterns are different.

    Now, we need to show that two chips $c < 2^{n-1}$ and $c' > 2^{n-1} + 1$ have different landing patterns. Chip $c$ lands on vertices with traversal strings that have $m$ or fewer twos for some $m < n-1$. Chip $c'$ lands on vertices with traversal strings that have $m'$ or fewer ones for some $m' < n-1$. It follows that chip $c$ lands on the second vertex and does not land on the second-to-last vertex. On the other hand, chip $c'$ always lands on the second-to-last vertex and never lands on the second vertex.
    
    The previous discussion did not cover chips $c$ such that both $c$ and $2^n + 1 -c$ have $n$ binary digits. These are chips $2^{n-1}$ and $2^{n-1}+1$, and they can land on and only on nontrivial vertices. Indeed observe that for any nontrivial traversal string $t$, $a(t, 1) \leq 2^{n-1} < b(t, 1)$ and consequently $2^{n-1}, 2^{n-1}+1 \in [a(t, 1), b(t, 1)]$; $2^{n-1}$ and $2^{n-1}+1$ can land on $v_t$ for any nontrivial $t$. Thus, they both land on the same vertices, and this set of vertices is different from the landing sets of other chips.
\end{proof}

\begin{remark}
The number of traversal strings up to permutations is $\binom{n+k-1}{n}$. Thus, the size of the set $U$ is not greater than $2\binom{n+k - 1}{n}$. For large $n$, this number is much smaller than $n^k$. Thus, with a fixed $k$ and increasing $n$, the groups become more noticeable.
\end{remark}

\begin{example}
    For $k=2$, the number of traversal strings up to permutations is $\binom{n+1}{n} = n+1$. We also know that all nontrivial strings $t$, the value $b(t,1)$ are greater than $2^{n-1}$, which is greater than all $a(t,1)$. In addition, for all trivial strings $t$, we have $a(t,1)= b(t,1)$. Thus, the size of the union $U$ is exactly $2n$, which is asymptotically much smaller than $2^n$. Thus, we see $2n-1$ groups.
\end{example}

\subsection{Spreads}
\label{sec:spreads}

Now we look at the leftmost and rightmost vertices that a given chip can land at. We start with some definitions. 

Given a nontrivial chip $c$, let us define $m$ as $\floor{\log_k c}$ and $y$ as $\floor{\frac{c}{k^m}}$. It follows that $yk^m \leq c < (y+1)k^m$. In other words, chip $c$ in $k$-ary has $m+1$ digits and the first digit is $y$.
 
Similarly, given chip $c$, we define $j$ as $\lfloor \log_k(k^n+1-c)\rfloor$ and $x$ as $k+1 - \floor{\frac{k^n+1-c}{k^j}}$. From the definition of $j$, the value $ k^n+1-c$ in $k$-ary has $j+1$ digits with $x$ as the most significant digit. From the definition of $j$ and $x$, it follows that $(k +1 - x) k^j\leq k^n+1-c < (k+2 - x)k^j$.

Note that by definition, we have $0 \leq m, j \leq n-1$; also $x \in \{2, 3, \dots, k\}$ and $y \in \{1, 2, \dots, k-1\}$. In addition, as chip $c$ is nontrivial, if $m=0$, then $y>1$, and if $j=0$, then $x < k$.

\begin{example}\label{ex:mjxy}
    Suppose $k=n=3$ and $c = 11$. Then $m = \floor{\log_k(c)} = \floor{\log_3(11)} = 2$ and $j = \floor{\log_k(k^n+1-c)} = \floor{\log_3(3^3+1-11)} = 2$. This implies that $y = \floor{\frac{c}{k^m}} = \floor{\frac{11}{3^2}} = 1$ and $x = k+1 - \floor{\frac{k^n+1-c}{k^j}} = 4- \floor{\frac{28 - 11}{3^2}} = 3$.
\end{example}

The values, $m$, $j$, $x$, and $y$ play a role throughout this section. We start with a lemma describing how they interact.

\begin{lemma}
\label{lem:nontrivialchip}
    Given a nontrivial chip $c$, either $j=n-1$ or $m=n-1$.
\end{lemma}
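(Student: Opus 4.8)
The plan is to recast each of the two conditions $m = n-1$ and $j = n-1$ as an explicit inequality on the chip label $c$, and then argue that the two \emph{negations} cannot hold simultaneously. Since ``either $j = n-1$ or $m = n-1$'' is an inclusive disjunction, it suffices to rule out the case in which both $m \le n-2$ and $j \le n-2$.

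First I would unwind the definition $m = \floor{\log_k c}$. Because $c$ is nontrivial we have $2 \le c \le k^n - 1 < k^n$, so $\floor{\log_k c} \le n-1$ always, and the condition $m = n-1$ is equivalent to $k^{n-1} \le c$. Consequently $m \le n-2$ holds precisely when $c \le k^{n-1} - 1$. Carrying out the same computation for $j = \floor{\log_k(k^n+1-c)}$, where the reflected label $k^n + 1 - c$ likewise lies in $\{2, \dots, k^n - 1\}$, the condition $j = n-1$ is equivalent to $k^{n-1} \le k^n + 1 - c$, that is $c \le k^n + 1 - k^{n-1}$; hence $j \le n-2$ holds precisely when $c \ge k^n + 2 - k^{n-1}$.

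The final step is to combine the two negations. If both $m \le n-2$ and $j \le n-2$ held, then $c$ would have to satisfy
\[
k^n + 2 - k^{n-1} \le c \le k^{n-1} - 1,
\]
which forces $k^n + 3 \le 2k^{n-1}$, equivalently $k^{n-1}(k-2) \le -3$. Since $k \ge 2$, the left-hand side is nonnegative, a contradiction. Therefore at least one of $m$ and $j$ equals $n-1$, as claimed.

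The argument is purely arithmetic, so I do not anticipate a genuine obstacle; the only point needing a little care is confirming the floor-of-logarithm translations at the endpoints, which is guaranteed by the nontriviality bounds $2 \le c \le k^n - 1$ together with the ranges $0 \le m, j \le n-1$ already recorded in the text. It is worth noting that the two conditions are \emph{not} mutually exclusive — for instance, when $k = 2$ both hold for $c \in \{2^{n-1}, 2^{n-1}+1\}$ — so the statement is genuinely an inclusive ``or'', and the proof correctly targets the simultaneous failure of both conditions rather than establishing a dichotomy.
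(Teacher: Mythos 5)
Your proof is correct and rests on the same arithmetic fact as the paper's: $m=n-1$ exactly when $c\ge k^{n-1}$ and $j=n-1$ exactly when $c\le k^n+1-k^{n-1}$, and these two ranges together cover every nontrivial chip since $k\ge 2$. The paper phrases this as a direct case split at $c=k^n/2$ with a symmetry appeal rather than your contrapositive, but the content is essentially identical.
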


\begin{proof}    
    Suppose $\frac{k^n}{2} \leq c \leq k^n-1$. Then $m = \floor{\log_k c} = n-1$. By symmetry, when $2 \leq c \leq \frac{k^n}{2}$, we have $j = n-1$.
\end{proof}

\begin{remark}
The lemma implies that for a nontrivial chip $c$, either the leftmost landing vertex is very close to 1, or the rightmost landing vertex is very close to $k^n$, or both.
\end{remark}

\begin{proposition}\label{prop:LeftRightmostVertex}
Consider $k^n$ chips at the root. Given a nontrivial chip $c \in \{2, 3, \dots, k^n-1\}$, the rightmost vertex it can go to corresponds to the traversal string $t = kk\dots kky11\dots 11$, where the number of $k$'s is $m$. Similarly, the leftmost vertex chip $c$ can go to corresponds to the traversal string $t = 11\dots 11xkk\dots kk$, where the number of $1$s is $j$.
\end{proposition}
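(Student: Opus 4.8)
The plan is to combine Theorem~\ref{thm:ranges}, which states that chip $c$ lands at $v_t$ on the last layer exactly when $a(t,1)\le c\le b(t,1)$, with a direct search for the lexicographically largest traversal string satisfying this constraint; since the left-to-right order of vertices on layer $n+1$ is the lexicographic order on their traversal strings, ``rightmost'' means ``lex-largest valid $t$''. I would prove only the rightmost claim in detail and then deduce the leftmost one by reflection. By Corollary~\ref{cor:btx} one has $b(t,1)=k^n+1-a(R(t),1)$ and hence $a(t,1)=k^n+1-b(R(t),1)$, so $a(t,1)\le c\le b(t,1)$ holds if and only if $a(R(t),1)\le k^n+1-c\le b(R(t),1)$; that is, chip $c$ can land at $v_t$ if and only if chip $k^n+1-c$ can land at $v_{R(t)}$. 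Since $R$ complements each digit, it reverses the left-to-right order of vertices, so the leftmost vertex for $c$ is $R$ of the rightmost vertex for $k^n+1-c$. The parameters $(m,y)$ of $k^n+1-c$ equal $(j,\,k+1-x)$ of $c$ by the definitions in this section, and applying $R$ to $\underbrace{k\cdots k}_{j}(k+1-x)\underbrace{1\cdots1}_{n-j-1}$ complements digits to give exactly $\underbrace{1\cdots1}_{j}\,x\,\underbrace{k\cdots k}_{n-j-1}$, the claimed leftmost string.

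For the rightmost string $t=\underbrace{k\cdots k}_{m}\,y\,\underbrace{1\cdots1}_{n-m-1}$, I would first verify it is a valid landing vertex. Its products give $a(t,1)=yk^m$ and $b(t,1)=k^n+1-(k+1-y)k^{n-m-1}$. The lower bound $a(t,1)=yk^m\le c$ is immediate from $m=\lfloor\log_k c\rfloor$ and $y=\lfloor c/k^m\rfloor$. For the upper bound $c\le b(t,1)$, using $c\le (y+1)k^m-1$ it suffices to establish the inequality
\[
g(y,m):=(y+1)k^m+(k+1-y)k^{n-1-m}\le k^n+2.
\]
I expect this ``boring inequality'' to be the main technical step. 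The clean dispatch is to treat $g$ as a function of a real variable $m\in[0,n-1]$: both coefficients $y+1$ and $k+1-y$ are positive, so $g$ is a sum of two exponentials in $m$ and hence convex, and its maximum over $[0,n-1]$ occurs at an endpoint. One checks $g(n-1)=(y+1)k^{n-1}+(k+1-y)$ is increasing in $y$ and $g(0)=(y+1)+(k+1-y)k^{n-1}$ is decreasing in $y$, so over $y\in\{1,\dots,k-1\}$ each is maximized at an extreme value of $y$, where both equal exactly $k^n+2$. Convexity then yields $g(y,m)\le k^n+2$ for every admissible $m$, so $v_t$ is valid.

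It remains to show no vertex to the right of $v_t$ is valid, i.e., every $t'>_{\mathrm{lex}}t$ fails $a(t',1)\le c$. Because the first $m$ digits of $t$ are the maximal value $k$, any lex-larger $t'$ must agree with $t$ on those positions and first exceed it at position $m+1$ or later. This gives two cases: either $t'_{m+1}\ge y+1$, whence $a(t',1)=\prod_\ell t'_\ell\ge (y+1)k^m$; or $t'_{m+1}=y$ with some later digit at least $2$, whence $a(t',1)\ge 2yk^m\ge (y+1)k^m$ using $y\ge 1$. In either case $a(t',1)\ge (y+1)k^m>c$, since $c<(y+1)k^m$, so $c$ cannot land at $v_{t'}$. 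Hence $v_t$ is the rightmost valid vertex, which proves the rightmost claim and, by the symmetry reduction above, the leftmost one. The only genuinely nonroutine part is the convexity argument for the upper-bound inequality; the maximality argument is a short lexicographic case split.
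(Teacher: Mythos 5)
Your proof is correct and follows the same skeleton as the paper's: verify that $t=k\cdots k\,y\,1\cdots 1$ is a valid landing vertex via Theorem~\ref{thm:ranges}, rule out every lexicographically larger $t'$ by the same two-case split showing $a(t',1)\ge (y+1)k^m>c$, and obtain the leftmost string by reflection. The one substantive difference is the step $c\le b(t,1)$: the paper disposes of it with the phrase ``by symmetry argument,'' which does not literally apply (the reflection of the already-proved bound $a(t,1)\le c$ concerns the vertex $1^j x k^{n-1-j}$ and the chip $k^n+1-c$, not $b$ of the rightmost vertex for $c$ itself), whereas you reduce it to the explicit inequality $(y+1)k^m+(k+1-y)k^{n-1-m}\le k^n+2$ and prove it by convexity in $m$ plus monotonicity in $y$ at the two endpoints, where equality holds. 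That inequality is genuinely needed (it is tight at $(y,m)=(1,n-1)$ and $(k-1,0)$), so your version actually supplies a detail the paper's proof elides; the rest of your argument buys nothing different from the paper's.
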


\begin{proof}
Suppose $v_t = kk\dots kky11\dots 11$, where the number of $k$'s is $m = \floor{\log_k c}$ and $y = \floor{\frac{c}{k^m}}$. We demonstrate that for any $v_{t'}$ to the right of $v_t$, chip $c$ cannot land at $v_{t'}$. We know that if $v_{t'}$ is to the right of $v_t$, then the first $m$ terms of $t'$ are $k$ and one of the following is true:
\begin{enumerate}
    \item $t'_{m+1} \geq (y+1) = \floor{\frac{c}{y^m}}+1$;
    \item $t'_{m+1}=y= \floor{\frac{c}{y^m}}$ and there exists a $j > m+1$ so that $t'_{j} \geq 2$.
\end{enumerate}
In both cases, we find that $\prod_{j=m+1}^nt'_j \geq y+1$ and $a(t', 1)  = \prod_{j=1}^nt'_j = k^m \prod_{j=m+1}^nt_j' \geq k^m(y+1) > c$. Thus, $c$ cannot land on $v_{t'}$. The argument for the leftmost vertex follows due to the reflection argument.

Now we show that $c$ can land on $v_t$. Observe that $a(t, 1)  = k^m \floor{\frac{c}{k^m}} \leq c$. By symmetry argument $b(t, 1) \geq c$. Thus, by Theorem~\ref{thm:ranges}, chip $c$ can land on $v_t$.
\end{proof}

\begin{corollary}\label{cor:spreadsize}
    Given a nontrivial chip $c \in \{2, 3, \dots, k^n-1\}$, the leftmost vertex it can land at has index $xk^{n-1-j}$; the rightmost vertex has index $1 + (y-1)k^{n-m- 1} +(k-1)\sum_{i=n-m}^{n-1}k^i = 1+(y-1)k^{n-m-1}+k^n - k^{n-m}$. Thus, the spread is equal $2+(y-1)k^{n-m-1}+k^n - k^{n-m} - xk^{n-1-j}$.
\end{corollary}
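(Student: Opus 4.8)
The plan is to compute the index of the leftmost and rightmost vertices directly from the explicit traversal strings given in Proposition~\ref{prop:LeftRightmostVertex}, then subtract. By that proposition, the leftmost vertex chip $c$ can reach is $v_t$ with $t = 11\dots 11\,x\,kk\dots kk$, where there are $j$ leading ones, then the digit $x$, then $n-1-j$ copies of $k$. The rightmost vertex is $v_{t'}$ with $t' = kk\dots kk\,y\,11\dots 11$, where there are $m$ leading $k$'s, then $y$, then $n-1-m$ trailing ones. So the whole statement is an application of Lemma~\ref{lem:landingordertonumber} (the formula for $|(t,x)|$) to these two strings, specialized to the terminal layer where the second coordinate is $1$.

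First I would handle the leftmost vertex. Using Lemma~\ref{lem:landingordertonumber} with the second coordinate $x=1$, the index is $1 + \sum_{\ell=1}^n (t_\ell - 1)k^{n+1-\ell}$. For the leftmost string the first $j$ digits are $1$, contributing nothing; the $(j+1)$st digit is $x$, contributing $(x-1)k^{n-j}$; and the final $n-1-j$ digits are each $k$, contributing $(k-1)\sum_{\ell=j+2}^{n} k^{n+1-\ell} = (k-1)\sum_{i=0}^{n-2-j} k^i = k^{n-1-j}-1$. Summing gives $1 + (x-1)k^{n-j} + k^{n-1-j} - 1 = (x-1)k^{n-j} + k^{n-1-j}$, and factoring out $k^{n-1-j}$ yields $k^{n-1-j}\big((x-1)k + 1\big)$. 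I would then check this equals the claimed $xk^{n-1-j}$, which holds precisely when $(x-1)k+1 = x$, i.e. when $k=1$ --- so the matching is not immediate and requires care about which digit is the ``most significant'' one. This is exactly where I expect the main subtlety to lie: reconciling the geometric-series bookkeeping with the compact closed form, and in particular making sure the placement of the distinguished digit ($x$ for the left, $y$ for the right) and the convention for $j$ from the definition at the start of Section~\ref{sec:spreads} are used consistently, since the definitions fix $(k+1-x)k^j \leq k^n+1-c$ and $yk^m \leq c$.

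Next I would compute the rightmost index analogously: the first $m$ digits are $k$, contributing $(k-1)\sum_{\ell=1}^{m}k^{n+1-\ell}$; the $(m+1)$st digit $y$ contributes $(y-1)k^{n-m}$; the trailing ones contribute nothing. The leading block gives $(k-1)\sum_{i=n-m+1}^{n} k^i$, matching the $(k-1)\sum_{i=n-m}^{n-1}k^i$ form in the statement after reindexing, which telescopes to $k^{n+1}$-type terms collapsing to $k^n - k^{n-m}$. Adding the $+1$ from the formula and the $(y-1)k^{n-m-1}$ term (again reconciling $k^{n-m}$ versus $k^{n-m-1}$ scaling) produces the stated $1 + (y-1)k^{n-m-1} + k^n - k^{n-m}$. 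The spread is then the rightmost index minus the leftmost index, giving $2 + (y-1)k^{n-m-1} + k^n - k^{n-m} - xk^{n-1-j}$ after I verify the $+1$'s combine to $+2$.

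The main obstacle, as flagged above, is purely in the indexing arithmetic: getting the exponents in the geometric sums exactly right (off-by-one errors in the ranges $\sum_{i=n-m}^{n-1}$ versus $\sum_{i=n-m+1}^{n}$ are easy to make) and confirming that the factor on the distinguished digit lands on $k^{n-1-j}$ for the left vertex and $k^{n-m-1}$ for the right vertex. Once the two indices are each verified against Lemma~\ref{lem:landingordertonumber}, the spread is a one-line subtraction and the corollary follows immediately; the reflection symmetry of Lemma~\ref{lem:smalllargechip} and Proposition~\ref{prop:LeftRightmostVertex} guarantees the left/right computations mirror each other, so in practice I would only compute one carefully and obtain the other by the substitution $t_\ell \mapsto k+1-t_\ell$, $x \leftrightarrow y$, $j \leftrightarrow m$.
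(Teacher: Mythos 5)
Your overall strategy is the intended one (the paper states this corollary without proof, and the implicit argument is exactly what you describe: apply the indexing formula to the two explicit traversal strings from Proposition~\ref{prop:LeftRightmostVertex} and subtract). However, your execution has a genuine gap that you flag but do not resolve: you compute the leftmost index as $k^{n-1-j}\bigl((x-1)k+1\bigr)$ and observe that this matches the claimed $xk^{n-1-j}$ only when $k=1$. A proof cannot end there. The source of the mismatch is that you used the exponent $k^{n+1-\ell}$ from the statement of Lemma~\ref{lem:landingordertonumber}. What you actually need is the \emph{vertex} index on layer $n+1$, which (from the induction inside the proof of that lemma, with string length $i=n$) is $\sum_{\ell=1}^{n}(t_\ell-1)k^{n-\ell}+1$; note also that the lemma's own worked example and proof body use $k^{n-\ell}$, so the $k^{n+1-\ell}$ in its statement is an off-by-one you should not propagate. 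With the correct exponent, for $t=1\cdots1\,x\,k\cdots k$ the distinguished digit contributes $(x-1)k^{n-(j+1)}=(x-1)k^{n-1-j}$ and the trailing $k$'s contribute $(k-1)\sum_{\ell=j+2}^{n}k^{n-\ell}=k^{n-1-j}-1$, so the index is $(x-1)k^{n-1-j}+k^{n-1-j}-1+1=xk^{n-1-j}$ exactly; there is no subtlety left over. (Your geometric-series evaluation also slips: $\sum_{\ell=j+2}^{n}k^{n+1-\ell}=\sum_{i=1}^{n-1-j}k^{i}$, not $\sum_{i=0}^{n-2-j}k^{i}$.)

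The rightmost computation in your sketch is correct once the same exponent fix is made: the leading $k$'s give $(k-1)\sum_{\ell=1}^{m}k^{n-\ell}=(k-1)\sum_{i=n-m}^{n-1}k^{i}=k^{n}-k^{n-m}$ and the digit $y$ gives $(y-1)k^{n-m-1}$, yielding $1+(y-1)k^{n-m-1}+k^{n}-k^{n-m}$. One last point: the spread in this corollary is the \emph{inclusive} count, i.e.\ rightmost index minus leftmost index \emph{plus one} (consistent with chip $1$ having spread $1$), which is where the constant $+2$ comes from; "the $+1$'s combine to $+2$" under a bare subtraction would give only $+1$.
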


\begin{example}
    If $k=n=3$ and $c = 11$ as in Example~\ref{ex:mjxy}, where we calculated that $m=j=2$, $y =1$, and $x=3$. It follows that the leftmost vertex corresponds to the traversal string 113, which is vertex number 3; the rightmost vertex corresponds to the traversal string 331, which is vertex number 25. The spread is 23.    
\end{example}

Now we look at the smallest and the second smallest spreads. If $n=1$, then all the spreads are of size 1. Thus, we need only to study the case where $n > 1$. The answer differs for $k=2$ and $k > 2$, so we examine them separately. Moreover, when $k=2$, we always have that $y=1$ and $x=2$.

\begin{theorem}
    Consider $2^n$ labeled chips at the root of a binary (2-ary) tree where $n > 1$. The smallest spread is 1, which occurs on chips 1 and $2^n$. The second smallest spread is $2^{n-1}$, which occurs on chips $2$, $3$, $2^n -2$, and $2^n -1$. The largest spread of vertices that a chip $c$ can land at is $2^n - 2$. This spread occurs on chips $2^{n-1}$ and $2^{n-1}+1$.
\end{theorem}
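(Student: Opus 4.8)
The plan is to collapse the general spread formula of Corollary~\ref{cor:spreadsize} into a one-parameter expression and then read off the extremes by monotonicity. First I would specialize to $k=2$. As noted just before the theorem, $k=2$ forces $x=2$ and $y=1$, so Corollary~\ref{cor:spreadsize} gives the spread of a nontrivial chip $c$ as
\[
s(c) \;=\; 2 + 2^n - 2^{n-m} - 2^{n-j},
\]
where $m=\lfloor \log_2 c\rfloor$ and $j=\lfloor \log_2(2^n+1-c)\rfloor$. Next I would invoke Lemma~\ref{lem:nontrivialchip}, which guarantees that either $m=n-1$ or $j=n-1$. Substituting $j=n-1$ makes $2^{n-j}=2$ and collapses the spread to $2^n-2^{n-m}$; substituting $m=n-1$ collapses it symmetrically to $2^n-2^{n-j}$. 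At the central chips $c=2^{n-1}$ and $c=2^{n-1}+1$ both equalities hold simultaneously, and one checks the two collapsed expressions agree (both give $2^n-2$).

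I would then dispose of the trivial chips separately: by Lemma~\ref{lem:TrivialChips}, chips $1$ and $2^n$ land on the unique vertices $v_{11\dots1}$ and $v_{kk\dots k}$, so each has spread $1$, which is plainly the global minimum. For the nontrivial chips I would use the reflection symmetry $c \mapsto 2^n+1-c$, which interchanges $m$ and $j$, to assume without loss of generality that $c\le 2^{n-1}$; in that regime $j=n-1$ and $s(c)=2^n-2^{n-m}$ with $m=\lfloor \log_2 c\rfloor$. Because $k=2$ forces $y=1$, the note preceding Example~\ref{ex:mjxy} rules out $m=0$ for a nontrivial chip, so $m$ ranges exactly over $\{1,2,\dots,n-1\}$. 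Since $2^n-2^{n-m}$ is strictly increasing in $m$, the nontrivial spread is minimized at $m=1$, giving $2^{n-1}$, and maximized at $m=n-1$, giving $2^n-2$.

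Finally I would translate the extremal exponents back into chip labels. In the regime $c\le 2^{n-1}$, the condition $m=1$ means $2\le c\le 3$, while $m=n-1$ together with $c\le 2^{n-1}$ forces $c=2^{n-1}$. Applying the reflection $c\mapsto 2^n+1-c$ yields the partner chips $2^n-2,\,2^n-1$ (second smallest) and $2^{n-1}+1$ (largest). Collecting the three cases produces exactly the claimed values: smallest spread $1$ at the trivial chips $1,2^n$; second smallest spread $2^{n-1}$ at $2,3,2^n-2,2^n-1$; and largest spread $2^n-2$ at $2^{n-1},2^{n-1}+1$. I expect the only delicate point to be the bookkeeping at the center, where $m=j=n-1$: one must confirm the two collapsed formulas coincide there and verify that $m\ge 1$ throughout, so that no nontrivial chip can achieve a spread below $2^{n-1}$; the rest is the monotonicity of $2^n-2^{n-m}$ and the reflection argument.
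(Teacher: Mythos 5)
Your proposal is correct and follows essentially the same route as the paper's proof: both specialize Corollary~\ref{cor:spreadsize} to $k=2$ (forcing $x=2$, $y=1$), invoke Lemma~\ref{lem:nontrivialchip} to fix one of $m,j$ at $n-1$, read off the extremes from monotonicity of $2^n-2^{n-\ast}$ in the remaining parameter, and transfer to the other half by reflection. The only difference is cosmetic — you normalize to $c\le 2^{n-1}$ and vary $m$ where the paper fixes $m=n-1$ and varies $j$ — and your explicit check that $m,j\ge 1$ (so no nontrivial spread falls below $2^{n-1}$) is a welcome bit of care the paper leaves implicit.
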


\begin{proof}
    The chip 1 always lands at the leftmost vertex. Thus, it has a spread of 1. A similar argument works for chip $2^n$.

For the rest of the proof, we assume $c \in \{2, 3, \dots, 2^n-1\}$. We know from Lemma~\ref{lem:nontrivialchip} that either $m = n-1$, or $j = n-1$, or both.

Suppose $m = n-1$, then, from Corollary~\ref{cor:spreadsize}, the spread is
\[2+(y-1)2^{0}+2^n - 2^{1} - x2^{n-1-j} = 2^n - 2k^{n-1-j}.\]

We observe that the second smallest spread equals $2^{n-1}$ and is achieved if $j = 1$. By definition of $j$, this means that
\[1 = \left \lfloor \log_2{(2^n+1-c)} \right \rfloor,\]
implying that $c = 2^n -1$ or $c=2^n-2$. By symmetry, it is also achieved for $c = 2$ or $c=3$.

The largest spread is $2^n-2$ and is achieved for $j=n-1$. By definition of $j$, this means that
\[n-1 = \left \lfloor \log_2{(2^n+1-c)} \right \rfloor,\]
implying that $2^n+1-c \geq 2^{n-1}$. Adding the fact that $m=n-1$, we get that $c \geq 2^{n-1}$. Thus, the largest spread is achieved when $c = 2^{n-1}$ or $c = 2^{n-1} + 1$.
\end{proof}

\begin{example}
    Consider a directed binary tree with $2^4=16$ labeled chips at the root. Here, the second smallest spread a chip can have is $2^{4-1}=8$, which is the spread of chips $2, 3$, $14$, and $15$. The largest spread a chip can have is $2^4-2 = 14$; this is the spread of chips $8$ and $9$.
\end{example}

\begin{theorem}\label{thm:SmallestBiggestSpread}
    Consider $k^n$ labeled chips at the root of a $k$-ary directed tree where $n > 1$ and $k > 2$. The smallest spread is 1, which occurs on chips 1 and $k^n$. The second smallest spread is $k^{n-1}$, which occurs on chips $2$ and $k^n -1$. The largest spread of vertices that a chip $c$ can land at is $k^n-k$. This spread occurs on chips $k^{n-1}$, $k^{n-1}+1$, $k^n - k^{n-1}$, and $k^{n}-k^{n-1}+1$.
\end{theorem}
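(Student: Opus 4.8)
The plan is to run everything through the closed forms of Corollary~\ref{cor:spreadsize}, writing the spread of a nontrivial chip $c$ as $S(c) = R - L + 1$, where $L = xk^{n-1-j}$ is the index of the leftmost landing vertex and $R = 1 + (y-1)k^{n-m-1} + k^n - k^{n-m}$ is the index of the rightmost one. Three facts do the work: Lemma~\ref{lem:nontrivialchip} (for nontrivial $c$, either $m = n-1$ or $j = n-1$); Lemma~\ref{lem:TrivialChips} for the trivial chips; and the left--right reflection of the tree, which carries the set of vertices reachable by $c$ onto the mirror image of the set reachable by $k^n + 1 - c$, so that $S(c) = S(k^n+1-c)$. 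Because of this reflection, each extremal claim needs to be proved on only one ``half'': chips with $j = n-1$ for the minimum, and chips with $m = n-1$ for the maximum, with the symmetric chips following for free.

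For the smallest spread, Lemma~\ref{lem:TrivialChips} confines chips $1$ and $k^n$ to a single vertex, giving spread $1$, and the computation below shows every nontrivial chip has spread at least $k^{n-1} > 1$. For the second-smallest spread I would assume $j = n-1$, so that $L = x \le k$, and split on $m$. If $m = 0$, nontriviality forces $y \ge 2$, i.e. $c = y \in \{2,\dots,k-1\}$; evaluating the formulas gives $x = 2$ and $S = (y-1)k^{n-1}$, minimized at $y = 2$ (chip $2$) with value $k^{n-1}$. If $m \ge 1$, then $k^{n-m} \le k^{n-1}$ forces $R \ge 1 + k^n - k^{n-1} = 1 + (k-1)k^{n-1}$, and since $L \le k$ we get $S \ge (k-1)k^{n-1} - k + 2 > k^{n-1}$ using $k \ge 3$ and $n > 1$. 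Hence the minimal nontrivial spread is $k^{n-1}$, attained only at chip $2$ and, by reflection, at chip $k^n - 1$.

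For the largest spread I would assume $m = n-1$, so that $R = k^n - k + y$ and $c \in [\,yk^{n-1}, (y+1)k^{n-1})$. The crux is to compare the leading $k$-ary digit $y$ of $c$ with that of $k^n + 1 - c$. Since $c \in [\,yk^{n-1}, (y+1)k^{n-1})$, the value $k^n + 1 - c$ lies in $\bigl((k-y-1)k^{n-1}+1,\ (k-y)k^{n-1}+1\bigr]$, so its leading digit is $k-y$ or $k-y-1$; this forces $x \in \{y+1, y+2\}$ and hence $L = x \ge y+1$ when $j = n-1$, while $L = xk^{n-1-j} \ge 2k \ge y+1$ when $j \le n-2$. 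In every case $L \ge y+1$, so $S = R - L + 1 \le (k^n - k + y) - (y+1) + 1 = k^n - k$. Equality is then exhibited directly: for $y = 1$ (chips $k^{n-1}$ and $k^{n-1}+1$) one computes $x = 2$, $L = 2$, $R = k^n - k + 1$, giving $S = k^n - k$, and reflection yields the same for $k^n - k^{n-1}$ and $k^n - k^{n-1}+1$. (The same computation shows equality holds precisely for $c \in \{yk^{n-1}, yk^{n-1}+1\}$ with $1 \le y \le k-1$, which contains the four listed representatives $y = 1$ and $y = k-1$.)

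The main obstacle is exactly the digit comparison in the previous paragraph. The crude bounds $L \ge 2$ and $R \le k^n - 1$ only yield $S \le k^n - 2$, and recovering the sharp value $k^n - k$ requires the observation that the leading $k$-ary digits of $c$ and of $k^n + 1 - c$ are essentially complementary, summing to roughly $k$, so the leftmost and rightmost landing vertices cannot both be pushed to their extremes simultaneously. Quantifying this complementarity through the interval containment above is the heart of the argument; the remaining steps are routine bookkeeping with Corollary~\ref{cor:spreadsize} together with the reflection symmetry.
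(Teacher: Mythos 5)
Your proof is correct, and it shares the paper's overall skeleton (Lemma~\ref{lem:TrivialChips} for the trivial chips, Corollary~\ref{cor:spreadsize} for the spread formula, Lemma~\ref{lem:nontrivialchip} plus reflection to reduce to one ``half''), but it differs at the decisive step and is in fact tighter there. For the maximum, the paper fixes $m=n-1$, observes that $L=xk^{n-1-j}$ is minimized at $j=n-1$, $x=2$, and evaluates the spread at the resulting chips $c\in\{k^{n-1},k^{n-1}+1\}$; it never rules out that some other combination (say $x=3$ with $y=2$) could do as well or better, since a larger $y$ increases $R$ while the constraint linking $y$ to $x$ is not examined. Your digit-complementarity observation --- that $m=j=n-1$ forces $x\in\{y+1,y+2\}$, hence $L\ge y+1$ in all cases --- supplies exactly the missing uniform upper bound $S\le k^n-k$, and as a bonus shows equality holds precisely for $c\in\{yk^{n-1},\,yk^{n-1}+1\}$, $1\le y\le k-1$. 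This is consistent with the theorem as stated (the four listed chips are among these) but shows that the paper's subsequent remark claiming the maximum is attained for exactly $4$ values of $c$ is wrong for $k\ge 4$: e.g.\ for $k=4$, $n=2$, chips $8$ and $9$ also have spread $12=k^n-k$. Your minimum argument (working on the $j=n-1$ side and splitting on $m=0$ versus $m\ge 1$) is likewise complete, and cleaner than the paper's, which asserts without justification that the minimum occurs at $j=0$, $x=k-1$ and contains a couple of arithmetic slips ($y=1$ where it should be $y=k-1$, and ``$c=3$'' where the symmetric chip is $c=2$). In short: same framework, but your version actually proves the extremality claims rather than exhibiting the extremizers, at the modest cost of the interval-containment computation for $k^n+1-c$.
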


\begin{proof}
    The chip 1 always lands at the leftmost vertex. Thus, it has a spread of 1. A similar argument works for chip $k^n$. 

For the rest of the proof, we assume $c \in \{2, 3, \dots, k^n-1\}$. We know from Lemma~\ref{lem:nontrivialchip} that either $m = n-1$, or $j = n-1$, or both.

If $m = n-1$, then, from Corollary~\ref{cor:spreadsize}, the spread is
\[2+(y-1)k^{0}+k^n - k^{1} - xk^{n-1-j} = k^n - xk^{n-1-j} -k + y + 1.\]

We observe that the second smallest spread is achieved if $j = 0$ and $x = k-1$. By definition of $j$ and $x$, this means that
\[2 = (k+1 - (k-1)) k^0 \leq k^n+1-c < (k+2 - (k-1))k^0 = 3,\]
implying that $c = k^n -1$. Then $y = \left \lfloor \frac{c}{k^m} \right \rfloor = \left \lfloor \frac{c}{k^{n-1}} \right \rfloor  = 1$. Thus, the smallest spread is
\[k^n - k^{n-1} -k + k-1 + 1 = k^{n-1}.\]
By symmetry, the same spread is also achieved for $c = 3$.

The largest spread is achieved when $j=n-1$. As we mentioned in the definition of $x$, in the case of $j=n-1$, the smallest possible $x$ is 2. This means that
\[k^n - k^{n-1} = (k+1 - 2) k^{n-1} \leq k^n+1-c < (k+2 - 2)k^{n-1} = k^n.\]
Hence $1 < c \leq k^{n-1}+1$. As we assumed that $\log_k c = m= n-1$, we get $k^{n-1} \leq c \leq k^{n-1}+1$. It follows that $y = \floor{\frac{c}{k^{n-1}}} =1$, and we find that the largest spread is
\[k^n - xk^{n-1-j} -k + y + 1  = k^n - 2k^{n-1-(n-1)} -k + 1 +1 = k^n - k.\]
By symmetry, it is also achieved for $k^n - k^{n-1} \leq c \leq k^{n}-k^{n-1}+1$.
\end{proof}

\begin{remark}
    Notice that the smallest spread of 1 is achieved for 2 values of $c$. The second smallest spread is achieved for 2 values of $c$ when $k > 2$ and for 4 values of $c$ when $k=2$. The largest smallest spread is achieved for 4 values of $c$ when $k > 2$ and for 2 values of $c$ when $k = 2$.
\end{remark}

\begin{example}
Consider a directed $3$-ary tree starting with $3^4$ labeled chips at the root. The smallest spread is 1, which occurs on chips $1$ and on $81$. Theorem~\ref{thm:SmallestBiggestSpread} tells us that second smallest spread is $3^{4-1}=27$, which occurs on chips $2$ and 80. In addition, the largest spread is $3^{4}-3=78$, which occurs on chips $3^{4-1}=27$, $3^{4-1}+1=28$, $3^4 - 3^{4-1}=54$, and $3^4 - 3^{4-1} + 1 =55$. 
\end{example}

We showed restrictions on how individual chips can land. We conclude with an example where each chip follows the restrictions, but the terminal configuration is not possible.

\begin{example}
    Suppose $k=2$ and $n=3$. Consider the configuration where the chips on layer 4 read from left to right are $15243678$. We observe that for each traversal string $t$, the chip at $v_t$ is within the landing range of $v_t$. However, this configuration cannot be attained from any chip-firing strategy. To see this, we observe that during the firing process, $0, 4$ must have been fired to $v_{11}$, and $1, 3$ must have been fired to $v_{12}$. We note that by applying Proposition~\ref{prop:Layer2} on the subtree rooted at $v_1$, the largest chip on $v_{11}$, which is $4$, must be smaller than that on $v_{12}$, which is $3$. We obtain a contradiction.
\end{example}

\section{Acknowledgments} 

We thank Professor Alexander Postnikov for suggesting the topic of labeled chip-firing on directed trees and helping formulate the proposal of this research problem, and for helpful discussions.

The first and second authors are employed by the MIT Department of Mathematics.

All figures in this paper were generated using TikZ.

\smallskip

\noindent
Ryota Inagaki \\
\textsc{
Department of Mathematics, Massachusetts Institute of Technology\\
77 Massachusetts Avenue, Building 2, Cambridge, Massachusetts, U.S.A. 02139}\\
\textit{E-mail address: }\texttt{inaga270@mit.edu}
\medskip

\noindent
Tanya Khovanova \\
\textsc{
Department of Mathematics, Massachusetts Institute of Technology\\
77 Massachusetts Avenue, Building 2, Cambridge, MA, U.S.A. 02139}\\
\textit{E-mail address: }\texttt{tanyakh@yahoo.com}
\medskip

\noindent
Austin Luo \\
\textsc{
Morgantown High School,\\
109 Wilson Ave, Morgantown, West Virginia, U.S.A. 26501}\\
\textit{E-mail address: }\texttt{austinluo116@gmail.com}
\medskip

\end{document}